\newcommand{\PP}{\mathbb{P}}
\newcommand{\ZZ}{\mathbb{Z}}
\newcommand{\kk}{\mathbf{k}}
\newcommand{\OX}{\mathcal{O}_{X}}
\newcommand{\shO}{\mathcal{O}}
\newcommand{\shI}{\mathcal{I}}
\newcommand{\shF}{\mathcal{F}}
\newcommand{\shE}{\mathcal{E}}
\newcommand{\Pic}{\mathrm{Pic}}
\newcommand{\Num}{\mathrm{Num}}
\newcommand{\Supp}{\mathrm{Supp}\kern1.2pt}
\newcommand{\ch}{\mathrm{ch}\kern.8pt}
\newcommand{\td}{\mathrm{td}\kern.8pt}
\newcommand{\codim}{\mathrm{codim}\kern1.2pt}
\newcommand{\depth}{\mathrm{depth}\kern.8pt}
\newcommand{\charnum}{characteristic numbers $(d,\epsilon,\tau)$}
\newtheorem{lemma}{Lemma}[section]
\newtheorem{theorem}[lemma]{Theorem}
\newtheorem{proposition}[lemma]{Proposition}
\newtheorem{corollary}[lemma]{Corollary}
\theoremstyle{definition}
\newtheorem{remark}[lemma]{Remark}
\newcounter{cont}
\begin{document}

\title{Non-vanishing theorems for rank two vector \\ bundles on threefolds
\footnote{The paper was written while all authors were supported by MIUR (PRIN grant) and by local funds of their Universities, and were members of INdAM-GNSAGA.}}

\author{E.~Ballico \and P.~Valabrega \and M.~Valenzano} 

\date{}

\maketitle

\begin{abstract}
The paper investigates the non-vanishing of  $H^1(\shE(n))$, where $\shE$ is a (normalized) rank two vector bundle over any smooth irreducible threefold $X$ of degree $d$ such that $\Pic(X) \cong \ZZ$.   
If $\epsilon$ is the integer defined by the equality $\omega_X = \shO_X(\epsilon)$, and  $\alpha$ is the least integer $t$ such that $H^0(\shE(t)) \ne 0$, then, for a non-stable $\shE$ ($\alpha \le 0)$  the first cohomology module does not vanish at least between the endpoints $\frac{\epsilon-c_1}{2}$ and $-\alpha-c_1-1$. The paper  also  shows that  there are other non-vanishing intervals, whose endpoints depend on $\alpha$ and also on the second Chern class $c_2$ of $\shE$. 
If $\shE$ is stable the first cohomology module does not vanish at least between the endpoints $\frac{\epsilon-c_1}{2}$ and $\alpha-2$.
The paper considers also the case of a threefold $X$ with $\Pic(X) \ne \ZZ$ but $\Num(X) \cong \ZZ$ and gives similar non-vanishing results.
\\[8pt]
\textbf{Keyword:} rank two vector bundles, smooth threefolds, non-vanishing of 1-cohomology.
\\[8pt]
\textbf{MSC 2010:} 14J60, 14F05.
\end{abstract}

\section{Introduction}
In 1942  G. Gherardelli (\cite{Ghe}) proved that, if $C$ is a smooth irreducible curve in $\PP^3$ whose canonical divisors are cut out by the surfaces of some degree $e$ and moreover all linear series cut out by the surfaces in $\PP^3$ are complete, then $C$ is the complete intersection of two surfaces. Shortly and in the language of modern algebraic geometry: every $e$-subcanonical smooth curve $C$ in $\PP^3$ such that $h^1(\shI_C(n)) = 0$ for all $n$ is the complete intersection of two surfaces.

Thanks to the Serre correspondence between curves and vector bundles (see \cite{Hvb}, \cite{H1}, \cite{H2}) the above statement is equivalent to the following one:
if $\shE$ is a rank two vector bundle on $\PP^3$ such that $h^1(\shE(n)) = 0$ for all $n$, then $\shE$ splits.

There are many improvements  of the above result with a variety of different approaches (see for instance \cite{CV1}, \cite{CV2}, \cite{Ellia}, \cite{P}, \cite{RV}): it comes out that  a rank two vector bundle $\shE$ on $\PP^3$ is forced to split if $h^1(\shE(n))$ vanishes for just one strategic $n$, and such a value $n$ can be chosen arbitrarily within a suitable interval, whose endpoints depend on the Chern classes and the least number $\alpha$ such that $h^0(\shE(\alpha)) \ne 0$.

When rank two vector bundles on a smooth threefold $X$ of degree $d$ in $\PP^4$ are concerned, similar results can be obtained, with some interesting difference. 

In 1998 Madonna (\cite{Madonna}) proved that on a smooth threefold $X$ of degree $d$ in $\PP^4$ there are ACM rank two vector bundles (i.e. whose 1-cohomology vanishes for all twists) that do not split. And this can happen, for a normalized vector bundle $\shE$ ($c_1\in\{0,-1\})$, only when $ 1-\frac{d+c_1}{2} < \alpha < \frac{d-c_1}{2}$, while an ACM rank two vector bundle on $X$ whose $\alpha$ lies outside of the interval is forced to split. 

The following non-vanishing results for a normalized non-split rank two vector bundle on a smooth irreducible thereefold of degree $d$ in $\PP^4$ are proved in \cite{Madonna}:

if $\alpha \le 1-\frac{d+c_1}{2}$, then $h^1(\shE(\frac{d-3-c_1}{2}))\ne 0$ if $d+c_1$ is odd, $h^1(\shE(\frac{d-4-c_1}{2}))\ne 0, h^1(\shE(\frac{d-2-c_1}{2}))\ne 0$ if $d+c_1$ is even, while $h^1(\shE(\frac{d-c_1}{2}))\ne 0$ if $d+c_1$ is even and moreover $\alpha \le -\frac{d+c_1}{2}$;

if $\alpha\ge \frac{d-c_1}{2}$, then $h^1(\shE(\frac{d-3-c_1}{2}))\ne 0$ if $d+c_1$ is odd, while $h^1(\shE(\frac{d-4-c_1}{2}))\ne 0$ if $d+c_1$ is even.

In \cite{Madonna} it is also claimed that the same techniques  work to obtain similar non-vanishing results on any smooth threefold $X$ with $\Pic(X) \cong \ZZ$ and $h^1(\shO_X(n)) = 0$, for every $n$.

The present paper investigates the non-vanishing of  $H^1(\shE(n))$, where $\shE$ is a rank two vector bundle over any smooth irreducible threefold $X$ of degree $d$ such that $\Pic(X) \cong \ZZ$ and $H^1(\shO_X(n)) = 0, \forall n$. Actually we can prove that  for such an $\shE$  there is a wider range of non-vanishing for $h^1(\shE(n))$, so improving the above results.  

More precisely, when $\shE$ is (normalized and) non-stable ($\alpha \le 0$)  the first cohomology module does not vanish at least between the endpoints $\frac{\epsilon-c_1}{2}$ and $-\alpha-c_1-1$, where $\epsilon$ is defined by the equality $\omega(X) = \shO_X(\epsilon)$ (and is $d-5$ if $X \subset \PP^4$). But we can show that  there are other non-vanishing intervals, whose endpoints depend on $\alpha$ and also on the second Chern class $c_2$ of $\shE$. 

If on the contrary $\shE$ is stable  the first cohomology module does not vanish at least between the endpoints $\frac{\epsilon-c_1}{2}$ and $\alpha-2$, but other ranges of non-vanishing can be produced.

We give a few examples obtained by pull-back from vector bundles on $\PP^3$.

We must remark that most of our non-vanishing results do not exclude the range for $\alpha$ between the endpoints $1-\frac{d+c_1}{2}$ and $\frac{d-c_1}{2}$ (for a general threefold it becomes $-\frac{\epsilon+3+c_1}{2} < \alpha < \frac{\epsilon+5-c_1}{2})$.  Actually  \cite{Madonna} produces some examples of nonsplit ACM rank two vector bundles on smooth hypersurfaces in $\PP^4$, but it can be seen that they do not conflict with our theorems.

As to threefolds with $\Pic(X) \ne \ZZ$, we need to observe that a key point is a good definition of the integer $\alpha$. We are able to prove, by using a boundedness argument, that $\alpha$ exists when $\Pic(X) \ne \ZZ$ but $\Num(X) \cong \ZZ$. In this event the correspondence between rank two vector bundles and two-codimensional subschemes can be proved to hold.  In order to obtain non-vanishing results that are similar to the results proved when $\Pic(X) \cong \ZZ$, we need also use the Kodaira vanishing theorem, which holds in characteristic 0.   We can extend the results to characteristic $p > 0$ if we assume a Kodaira-type vanishing condition.
\section {Notation}

We work over an algebraically closed field $\kk$ of any characteristic.  
\\
Let $X$ be a non-singular irreducible projective algebraic variety of dimension 3, for short a smooth  threefold.
\\
We fix an ample divisor $H$ on $X$, so we consider the polarized threefold $(X,H)$. 
\\
We denote with $\OX(n)$, instead of $\OX(nH)$, the invertible sheaf corresponding to the divisor $nH$, for each $n\in\ZZ$.
\\
For every cycle $Z$ on $X$ of codimension $i$ it is defined its degree with respect to $H$, i.e.
$\deg(Z;H) := Z \cdot H^{3-i}$,
having identified a codimension 3 cycle on $X$, i.e. a $0$-dimensional cycle, with its degree, which is an integer. \\
From now on (with the exception of section 7) we consider a smooth polarized threefold $(X,\shO_{X}(1))=(X,H)$ that 
satifies the following conditions:
\begin{description}
\item[\qquad$\mathbf{(C1)}$] $\Pic(X)\cong\ZZ$ generated by $[H]$,
\item[\qquad$\mathbf{(C2)}$] $H^1(X,\OX(n)) = 0$ for every $n\in\ZZ$,
\item[\qquad$\mathbf{(C3)}$] $h^0(O_X(1)) \ne 0$.
\end{description}
By condition $\mathbf{(C1)}$ every divisor on $X$ is linearly equivalent to $aH$ 
for some integer $a\in\ZZ$, i.e.\ every invertible sheaf on $X$ is (up to an isomorphism) of type $\OX(a)$ for some  $a\in\ZZ$, in particular we have for the canonical divisor
$K_X \sim \epsilon H$, or equivalently $\omega_{X}\simeq\shO_{X}(\epsilon)$, 
for a suitable integer $\epsilon$.
Furthermore, by Serre duality condition $\mathbf{(C2)}$ implies that $H^2(X,\OX(n)) = 0$ for all $n\in\ZZ$.
\\
Since by assumption $A^{1}(X)=\Pic(X)$ is isomorphic to $\ZZ$ through the 
map $[H]\mapsto 1$, where $[H]=c_{1}(\shO_{X}(1))$, we identify the first 
Chern class  $c_1(\shF)$ of a coherent sheaf with a whole number $c_1$, where $c_1(\shF) = c_1 H$.
\\
The second Chern class $c_2(\shF)$ gives the integer $c_2 = c_2(\shF)\cdot H$ and we will call this integer the second Chern number or the second Chern class of $\shF$.
\\
We set
$$d := \deg(X;H) = H^3,$$
so $d$ is the \lq\lq degree\rq\rq\ of the threefold $X$ with respect to the ample divisor $H$.
\\
Let $c_1(X)$ and $c_2(X)$ be the first and second Chern classes of $X$, that is of its tangent bundle $TX$ (which is a locally free sheaf of rank 3); then we have
$$c_1(X) = [-K_X] = -\epsilon [H],$$
so we identify the first Chern class of $X$ with the integer $-\epsilon$. Moreover we set
$$\tau := \deg(c_2(X);H) = c_2(X) \cdot H,$$
i.e.\ $\tau$ is the degree of the second Chern class of the threefold $X$.
\\
In the following we will call the triple of integers $(d,\epsilon,\tau)$ the \textbf{characteristic numbers} of the polarized threefold $(X,\OX(1))$.

We recall the well-known Riemann-Roch formula on the threefold $X$ (see \cite{Valenzano}, proposition 4).
\begin{theorem}[Riemann-Roch]\label{gRR}
Let $\shF$ be a rank $r$ coherent sheaf on $X$ with Chern classes 
$c_{1}(\shF)$, $c_{2}(\shF)$ and $c_{3}(\shF)$. Then the 
Euler-Poincar\'e characteristic of $\shF$ is
\begin{align*}
\chi(\shF) = & \frac{1}{6}\Big(c_{1}(\shF)^{3} - 3 c_{1}(\shF)\cdot 
c_{2}(\shF) + 3 c_{3}(\shF)\Big) + 
\frac{1}{4}\Big(c_{1}(\shF)^{2} - 2 c_{2}(\shF)\Big)\cdot 
c_{1}(X) + \\
& + \frac{1}{12} c_{1}(\shF)\cdot\Big(c_{1}(X)^{2} + c_{2}(X)\Big) +
\frac{r}{24} c_{1}(X)\cdot c_{2}(X)
\end{align*}
where $c_{1}(X)$ and $c_{2}(X)$ are the Chern classes of $X$, that is 
the Chern classes of the tangent bundle $TX$ of $X$.
\end{theorem}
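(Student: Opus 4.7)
The plan is to derive the formula as a direct application of the Hirzebruch--Riemann--Roch theorem, which for a coherent sheaf $\shF$ on the smooth projective threefold $X$ reads
$$\chi(\shF) = \int_X \ch(\shF)\cdot\td(TX).$$
Since $X$ has dimension $3$, only the degree $3$ component of the product $\ch(\shF)\cdot\td(TX)$ contributes to the integral, so the strategy is to expand both factors up to Chern-degree $3$ and collect terms.

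First I would write down the Chern character expansion of $\shF$ in terms of its Chern classes:
$$\ch(\shF) = r + c_1(\shF) + \tfrac{1}{2}\bigl(c_1(\shF)^2 - 2c_2(\shF)\bigr) + \tfrac{1}{6}\bigl(c_1(\shF)^3 - 3c_1(\shF)\cdot c_2(\shF) + 3c_3(\shF)\bigr),$$
which comes from the Newton identities applied to the formal roots of the total Chern polynomial. In parallel I would expand the Todd class of $TX$ in terms of $c_1(X)$ and $c_2(X)$ using the standard series $\td = 1 + \tfrac{1}{2}c_1 + \tfrac{1}{12}(c_1^2 + c_2) + \tfrac{1}{24}c_1c_2 + \cdots$, so that
$$\td(TX) = 1 + \tfrac{1}{2}c_1(X) + \tfrac{1}{12}\bigl(c_1(X)^2 + c_2(X)\bigr) + \tfrac{1}{24}c_1(X)\cdot c_2(X).$$

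Next I would multiply these two expansions and extract the component of codimension $3$. The degree $3$ part of $\ch(\shF)\cdot \td(TX)$ is a sum of four contributions obtained by pairing $\ch_i$ with $\td_{3-i}$: $\ch_3\cdot 1$ gives the first parenthesis in the statement, $\ch_2\cdot \tfrac{1}{2}c_1(X)$ gives the second, $\ch_1\cdot \td_2$ gives the third, and $\ch_0\cdot \td_3$ gives the fourth. After writing each of these four terms out in the chosen normalization and evaluating on the fundamental class of $X$, one obtains exactly the stated formula.

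The bookkeeping of numerical coefficients is the only real source of difficulty: one must be careful with the factors of $\tfrac{1}{2}$, $\tfrac{1}{6}$, $\tfrac{1}{12}$ and $\tfrac{1}{24}$ in the Chern character and Todd expansions so that the final expression is presented as in the statement (notably with the factor $\tfrac{1}{4}$ in front of $(c_1(\shF)^2 - 2c_2(\shF))\cdot c_1(X)$, which arises from the product of $\tfrac{1}{2}\cdot\tfrac{1}{2}$). Beyond this algebraic verification, no geometry enters the proof: conditions \cu--\cd\ are not needed here, because the formula is valid for any rank $r$ coherent sheaf on any smooth projective threefold.
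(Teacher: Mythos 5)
Your derivation is correct: expanding $\ch(\shF)=r+c_1(\shF)+\tfrac12(c_1(\shF)^2-2c_2(\shF))+\tfrac16(c_1(\shF)^3-3c_1(\shF)\cdot c_2(\shF)+3c_3(\shF))$ against $\td(TX)=1+\tfrac12 c_1(X)+\tfrac1{12}(c_1(X)^2+c_2(X))+\tfrac1{24}c_1(X)\cdot c_2(X)$ and taking the degree-$3$ term of the product reproduces the four summands with exactly the stated coefficients. The paper gives no proof of its own (it only cites \cite{Valenzano}, Proposition~4), and your Hirzebruch--Riemann--Roch computation is precisely the standard argument that reference carries out, so there is nothing to add beyond the routine remark that $\ch$ and $\chi$ extend additively from locally free to coherent sheaves via finite locally free resolutions on the smooth projective $X$.
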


So applying the Riemann-Roch Theorem to the invertible sheaf $\OX(n)$, for each $n\in\ZZ$, we get the Hilbert polynomial of the sheaf $\OX(1)$
\begin{equation}
\chi(\OX(n)) = \frac{d}{6} \left(n - \frac{\epsilon}{2}\right) \left[ \left(n - \frac{\epsilon}{2}\right)^2 + \frac{\tau}{2d} - \frac{\epsilon^2}{4} \right]\!.
\end{equation}

Let $\shE$ be a rank 2 vector bundle on the threefold $X$ with Chern classes $c_1(\shE)$ and $c_2(\shE)$, so with Chern numbers  $c_1$ and $c_2$. 
We assume that $\shE$ is normalized, i.e.\ that $c_1 \in\{0,-1\}$.
It is defined the integer  $\alpha$, the so called first relevant level, such that $h^0(\shE(\alpha)) \ne 0, h^0(\shE(\alpha-1)) = 0$. If $\alpha > 0$, $\shE$ is called stable, non-stable otherwise.

We set
$$\vartheta = \frac{3c_2}{d} - \frac{\tau}{2d} + \frac{\epsilon^2}{4}-\frac{3c_1^2}{4},  \qquad
\zeta_0 = \frac{\epsilon-c_1}{2}, \quad \text{and} \quad w_0 = [\zeta_0]+1,$$
where $[\zeta_0]=$ integer part of $\zeta_0$, 
so the Hilbert polynomial of $\shE$ can be written as
$$\chi(\shE(n)) = \frac{d}{3}\big(n - \zeta_0\big)\Big[\big(n - \zeta_0\big)^2 - \vartheta\Big].$$
If $\vartheta\ge0$ we set
$$\zeta = \zeta_0 + \sqrt{\vartheta}$$
so in this case the Hilbert polynomial of $\shE$ has the three real roots
$\zeta' \le \zeta_0 \le \zeta$
where $\zeta' =  \zeta_0 - \sqrt{\vartheta}, \zeta =  \zeta_0 + \sqrt{\vartheta}$.
We also define $\bar\alpha = [\zeta]+1$. \\
The polinomial $\chi(\shE(n))$, as a rational polynomial,
 has three real roots if and only if $\vartheta\ge0$, and it has only one real root if and only if $\vartheta<0$.
\\
If $\shE$ is normalized, we set $$\delta = c_2+d\alpha^2+c_1d\alpha.$$ 
 \begin{remark}We have $\delta = 0$ if and only if $\shE$ splits (see \cite{VV}, Lemma 3.13: the proof works in general).\end{remark}

Unless stated otherwise, we work over the smooth polarized threefold $X$ and \emph{$\shE$ is a normalized non-split rank two vector bundle on $X$}.

\section{About the characteristic numbers $\epsilon$ and $\tau$}
  
 In this section we want to recall some essentially known properties of the characteristic numbers of the threefold $X$  (see also \cite{SB} for more general statements).   
 We start with the following remark.

\begin{remark}\label{OX}
1.\ For the fixed ample invertible sheaf $\OX(1)$ we have
$$h^0(\OX(n)) \begin{cases} = 0 & \quad\text{for } n < 0 \\
= 1 & \quad \text{for } n = 0 \\
\ne 0 & \quad\text{for } n > 0 \end{cases}$$
and also $h^0(\OX(m)) - h^0(\OX(n)) > 0$ for all $n,m\in\ZZ$ with $m > n \ge 0$. 
\\
2.\ It holds
$$\chi(\OX) = h^0(\OX) - h^3(\OX) = 1 - h^0(\OX(\epsilon)),$$
so we have:
$$\chi(\OX) = 1 \iff \epsilon < 0, \ \ \
\chi(\OX) = 0 \iff \epsilon = 0, \ \ \
\chi(\OX) < 0 \iff \epsilon > 0.$$
\end{remark}

\begin{proposition}
Let $(X,\OX(1))$ be a smooth polarized threefold with \charnum. Then it holds:
\begin{description}
\item[1)] $\epsilon \ge -4$,
\item[2)] $\epsilon = -4$ if and only if $X = \PP^3$, i.e. $(d,\epsilon,\tau)=(1,-4,6)$ and so $\frac{\tau}{2d} - \frac{\epsilon^2}{4} = -1$,
\item[3)] if $\epsilon = -3$, then $X$ is a hyperquadric in $\PP^4$, so $(d,\epsilon,\tau)=(2,-3,8)$ and   $\frac{\tau}{2d} - \frac{\epsilon^2}{4} = -\frac{1}{4}$, 
\item[4)] $\epsilon\tau$ is a multiple of $24$, in particular if $\epsilon < 0$ then $\epsilon \tau = -24$,
\item[5)] if $\epsilon\ne 0$, then $\tau > 0$,
\item[6)] if $\epsilon = 0$, then $\tau > -2d$,
\item[7)] $\tau$ is always even,
\item[8)] if $\epsilon$ is even, then\ \
$\frac{\tau}{2d} - \frac{\epsilon^2}{4} \ge -1$,
\item[9)] if $\epsilon$ is odd, then\ \
$\frac{\tau}{2d} - \frac{\epsilon^2}{4} \ge -\frac{1}{4}$,
\item[10)] if $\epsilon < 0$, then the only possibilities for $(\epsilon,\tau)$ are the following
$$(\epsilon,\tau) \in \{(-4,6), \, (-3,8), \, (-2,12), \, (-1,24) \},$$

\end{description}
\end{proposition}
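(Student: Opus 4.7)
The plan is to combine the Hilbert polynomial (Theorem~\ref{gRR}, or equivalently formula~(1)) with Serre duality and condition~$\mathbf{(C2)}$. The latter, together with Serre duality, forces $h^{1}(\OX(n))=h^{2}(\OX(n))=0$ and $h^{3}(\OX(n))=h^{0}(\OX(\epsilon-n))$ for every $n\in\ZZ$, so that
$$\chi(\OX(n))\;=\;h^{0}(\OX(n))-h^{0}(\OX(\epsilon-n));$$
by Remark~\ref{OX} the right-hand side is non-negative as soon as $n\ge\epsilon-n$, and is strictly positive as soon as $n>\epsilon-n\ge 0$.

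I would first prove (4), (5) and (10) by evaluating at $n=0$. Formula~(1) gives $\chi(\OX)=-\epsilon\tau/24$, while the displayed identity gives $\chi(\OX)=1-h^{0}(\OX(\epsilon))$. For $\epsilon<0$ the latter equals $1$ by Remark~\ref{OX}, whence $\epsilon\tau=-24$; for $\epsilon=0$ both sides vanish; for $\epsilon>0$ Remark~\ref{OX} forces $h^{0}(\OX(\epsilon))\ge 2$, so $\chi(\OX)\le-1$ and $\epsilon\tau\ge 24$. This proves (4), and in particular gives $\tau>0$ whenever $\epsilon\ne 0$, which is (5); combined with (1) it produces the list in (10). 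For items (1)--(3) I would quote the Kobayashi-Ochiai theorem: a smooth Fano $n$-fold has Fano index $\le n+1$, with equality iff it is $\PP^{n}$, and index $n$ iff it is a smooth quadric. By $\mathbf{(C1)}$ the Fano index of $X$ (when $\epsilon<0$) is precisely $-\epsilon$, so $-\epsilon\le 4$, which is~(1); the extremal cases yield $X\cong\PP^{3}$ in~(2) and $X$ a smooth hyperquadric in $\PP^{4}$ in~(3), and the corresponding $(d,\epsilon,\tau)$ are read off from $d=H^{3}$, $\epsilon\tau=-24$, and the standard computation of $c_{2}(X)$.

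The remaining items (6)--(9) follow by evaluating $\chi(\OX(n))$ at suitable $n$ close to $\epsilon/2$. For (6), with $\epsilon=0$, formula~(1) gives $\chi(\OX(1))=(2d+\tau)/12$; the vanishings plus $\mathbf{(C3)}$ make $\chi(\OX(1))=h^{0}(\OX(1))\ge 1$, hence $\tau\ge 12-2d>-2d$. For (7), expanding formula~(1) at $n=1$ yields $24\chi(\OX(1))=(2-\epsilon)(2d(1-\epsilon)+\tau)$; integrality combined with~(4) forces $\tau\equiv -d(\epsilon-1)(\epsilon-2)\pmod{12}$, and the right-hand side is even because $(\epsilon-1)(\epsilon-2)$ is a product of two consecutive integers. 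For (8) and (9) I would choose $n$ just above $\epsilon/2$, namely $n=\epsilon/2+1$ when $\epsilon$ is even and $n=(\epsilon+1)/2$ when $\epsilon$ is odd; in both cases $n>\epsilon-n$, so the displayed identity gives $\chi(\OX(n))\ge 0$. Substituting into formula~(1) gives respectively $1+\tau/(2d)-\epsilon^{2}/4\ge 0$ and $\tfrac{1}{4}+\tau/(2d)-\epsilon^{2}/4\ge 0$, which are (8) and (9).

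The main obstacle is item~(1), and with it (2) and (3): the bound $\epsilon\ge-4$ and the characterisations of $\PP^{3}$ and of the quadric are not formal consequences of Riemann-Roch and must be imported from the Kobayashi-Ochiai classification. Everything else reduces to integrality and non-negativity of $\chi(\OX(n))$ at well-chosen twists.
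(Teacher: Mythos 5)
Your proof is correct and follows essentially the same strategy as the paper: items (4), (5), (6), (8), (9), (10) are obtained exactly as in the paper's proof, by combining the Riemann--Roch expression for $\chi(\OX(n))$ with the identity $\chi(\OX(n))=h^0(\OX(n))-h^0(\OX(\epsilon-n))$ coming from $\mathbf{(C2)}$ and Serre duality, evaluated at $n=0$, $n=1$, and $n$ just above $\epsilon/2$; and items (1)--(3) are imported from the classification of Fano threefolds of large index, just as the paper imports them from \cite{SB}. The one genuinely different step is item (7): the paper evaluates $\chi$ at $n=\epsilon/2+1$ (resp.\ the analogous half-integral shift for odd $\epsilon$) and reads off parity from integrality there, whereas you evaluate at $n=1$, subtract the multiple-of-$24$ relation $\epsilon\tau\equiv 0$ from $24\chi(\OX(1))=(2-\epsilon)\bigl(2d(1-\epsilon)+\tau\bigr)$ to get $\tau\equiv -d(\epsilon-1)(\epsilon-2)\pmod{12}$, and conclude from the evenness of a product of consecutive integers; this is a valid and arguably cleaner variant, since it treats even and odd $\epsilon$ uniformly. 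One caveat: the paper works over a field of arbitrary characteristic, and the Kobayashi--Ochiai theorem you invoke for (1)--(3) is a characteristic-zero statement; to cover characteristic $p>0$ you need its extension due to Shepherd-Barron (the reference \cite{SB} the paper actually cites), so your citation should be adjusted accordingly rather than your argument.
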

\begin{proof} 

For statements \textbf{1)}, \textbf{2)}, \textbf{3)} see \cite{SB}.
\\
\textbf{4)} Observe that $\chi(\OX) = - \frac{1}{24} \epsilon \tau$ is an integer, and moreover, if $\epsilon < 0$, then $\chi(\OX)=1$.
\\
\textbf{5)} By Remark~\ref{OX} we have: if $\epsilon > 0$ then $- \frac{1}{24} \epsilon \tau < 0$, while if $\epsilon < 0$ then $- \frac{1}{24} \epsilon \tau > 0$. In both cases we deduce $\tau > 0$.
\\
\textbf{6)} If $\epsilon = 0$, then we have
$$\chi(\OX(n)) = \frac{d}{6} n  \left(n^2 + \frac{\tau}{2d} \right),$$
and also
$$\chi(\OX(n)) = h^0(\OX(n)) > 0 \quad \forall n > 0,$$
therefore we must have $\frac{2d+\tau}{12} > 0$, so $\tau > -2d$.
\\
\textbf{7)} Assume that $\epsilon$ is even, then we have
$$d\left(1- \frac{\epsilon}{2}\right)\left(1+ \frac{\epsilon}{2}\right) +  \frac{\tau}{2} = 
d\left(1-\frac{\epsilon^2}{4}+\frac{\tau}{2d}\right) = 6\, \chi\left(\OX\left(\frac{\epsilon}{2}+1\right)\right) \in\ZZ$$
and moreover $d\left(1- \frac{\epsilon}{2}\right)\left(1+ \frac{\epsilon}{2}\right)\in\ZZ$, so $\tau$ must be even. \\
If $\epsilon$ is odd, the proof is quite similar.
\\
\textbf{8)} Let $\epsilon$ be even. If it holds
$$h^0\left(\OX\!\left(\frac{\epsilon}{2}+1\right)\right) - h^0\left(\OX\!\left(\frac{\epsilon}{2}-1\right)\right) = \chi\left(\OX\left(\frac{\epsilon}{2}+1\right)\right) < 0,$$
then we must have $h^0\left(\OX\left(\frac{\epsilon}{2}-1 \right)\right) \ne 0$, which implies 
$$h^0\left(\OX\!\left(\frac{\epsilon}{2}+1\right)\right) - h^0\left(\OX\!\left(\frac{\epsilon}{2}-1\right)\right) \ge 0,$$ a contradiction. 
So we must have:$$  \chi\left(\OX\!\left(\frac{\epsilon}{2}+1\right)\right) =
 \frac{d}{6}\left(1 + \frac{\tau}{2d} - \frac{\epsilon^2}{4}\right) \ge 0,$$
therefore
$$\frac{\tau}{2d} - \frac{\epsilon^2}{4} \ge -1.$$
\\
\textbf{9)}  The proof is quite similar to the proof of \textbf{8}).
\\
\textbf{10)} If $\epsilon < 0$, then by \textbf{1)} we have $\epsilon\in\{-4,-3,-2,-1\}$, and moreover $\epsilon \tau = -24$ by \textbf{4)}, so we obtain the thesis.
\end{proof}

\section{Non-stable vector bundles ($\alpha \le 0$)}

\noindent
\textbf{Case $\epsilon \ge 1$.}\\
In this subsection we make the following assumptions:

\emph{$\shE$ is a normalized non-split  rank two vector bundle with $\alpha \le 0$ and $\epsilon \ge 1$}. 
\\
The case $\epsilon \le 0$ is investigated in the next subsection.
\medskip
  
\begin{proposition}\label{nonstable0}
Assume that $\zeta_0 < -\alpha-c_1-1$. Then it holds: $$h^1(\shE(n))-h^2(\shE(n)) = (n-\zeta_0)\delta$$  for every $n$ such that $\zeta_0 < n \le -\alpha-c_1-1$.
\end{proposition}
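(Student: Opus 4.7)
The plan is to feed the Serre--Hartshorne construction attached to a minimal section of $\shE(\alpha)$ into a careful cohomology chase. Since $\alpha \le 0$ and $\shE$ is non-split, the minimal section of $\shE(\alpha)$ vanishes in pure codimension two (no divisorial component by minimality of $\alpha$; zero locus nonempty since otherwise (C2) would force the resulting extension of line bundles to split) on a locally complete intersection curve $Y \subset X$, giving
$$0 \to \OX(-\alpha) \to \shE \to \IY(\alpha+c_1) \to 0.$$
A Chern class computation identifies $\deg Y$ with $\delta$, while $N_{Y/X} \cong \shE(\alpha)\vert_Y$ together with adjunction yields $\omega_Y \cong \shO_Y(\epsilon + c_1 + 2\alpha)$; hence $\chi(\shO_Y) = -(\epsilon + c_1 + 2\alpha)\delta/2$ by Riemann--Roch on the curve $Y$.

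I would next unpack the hypothesis: $\zeta_0 < -\alpha - c_1 - 1$ rearranges to $\epsilon + 2\alpha + c_1 < -2$, in particular forcing $\epsilon + \alpha < \zeta_0$. Thus for every integer $n$ in the range $\zeta_0 < n \le -\alpha - c_1 - 1$ we have $n - \alpha > \epsilon$, so $h^3(\OX(n-\alpha)) = h^0(\OX(\epsilon - n + \alpha)) = 0$ by Serre duality on $X$; and setting $m := n + \alpha + c_1$, the same range yields $m \le -1$.

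Now thread the cohomology of the twisted short exact sequence. Condition (C2) kills $H^1$ and $H^2$ of $\OX(n-\alpha)$, and the vanishing of $H^3(\OX(n-\alpha))$ closes off the connecting map, producing isomorphisms $H^1(\shE(n)) \cong H^1(\IY(m))$ and, crucially, $H^2(\shE(n)) \cong H^2(\IY(m))$. A second chase, through $0 \to \IY(m) \to \OX(m) \to \shO_Y(m) \to 0$, together with $H^i(\OX(m)) = 0$ for $i \le 2$ (from $m < 0$ and (C2)), gives $H^1(\IY(m)) \cong H^0(\shO_Y(m))$ and $H^2(\IY(m)) \cong H^1(\shO_Y(m))$. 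Hence
$$h^1(\shE(n)) - h^2(\shE(n)) = \chi(\shO_Y(m)) = \delta m + \chi(\shO_Y),$$
and substituting the value of $\chi(\shO_Y)$ with $2\zeta_0 = \epsilon - c_1$ collapses everything to $(n-\zeta_0)\delta$.

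The main subtlety is the identification $H^2(\shE(n)) \cong H^2(\IY(m))$: without it the computation would only yield an inequality. This is exactly what the hypothesis $\zeta_0 < -\alpha - c_1 - 1$ is engineered to deliver, by killing the $H^3(\OX(n-\alpha))$ term into which the connecting map lands.
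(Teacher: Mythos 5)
Your argument is correct, but it reaches the identity by a genuinely different route from the paper's. The paper never leaves the threefold: it writes $h^1(\shE(n))-h^2(\shE(n)) = h^0(\shE(n)) - h^3(\shE(n)) - \chi(\shE(n))$, converts $h^3(\shE(n))$ into $h^0(\shE(\epsilon-n-c_1))$ by Serre duality on $\shE$, computes both $h^0$'s from the Serre sequence together with $\mathbf{(C2)}$ (each collapses to a $\chi(\OX(\cdot))$ precisely because $n+\alpha+c_1<0$ and $\epsilon-n+\alpha<0$, the same two inequalities you extract from the hypothesis), and then expands three Euler characteristics with the threefold Riemann--Roch formula. You instead push the whole computation down to the curve $Y$: your two cohomology chases give $H^1(\shE(n))\cong H^0(\shO_Y(m))$ and $H^2(\shE(n))\cong H^1(\shO_Y(m))$, so the difference is $\chi(\shO_Y(m))=m\delta+\chi(\shO_Y)$, evaluated by Riemann--Roch on $Y$ using $\deg Y=\delta$ and the subcanonicality $\omega_Y\cong\shO_Y(\epsilon+c_1+2\alpha)$ from adjunction. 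Your version needs a little more input (that $Y$ is a locally complete intersection curve with $N_{Y/X}\cong\shE(\alpha)\vert_Y$, and the genus formula $\chi(\shO_Y)=-\tfrac{1}{2}\deg\omega_Y$ for Gorenstein curves), but it buys more than the paper's: it identifies the individual groups rather than only their difference, it explains structurally why the answer is linear in $n$ with slope $\delta$ and root $\zeta_0$, and it makes explicit that the hypothesis $\zeta_0<-\alpha-c_1-1$ enters exactly to kill $H^3(\OX(n-\alpha))$ --- which is the same point where the paper uses it, in the guise of $h^0(\OX(\epsilon-n+\alpha))=0$. It also treats $c_1=0$ and $c_1=-1$ uniformly, where the paper argues the two cases separately.
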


\begin{proof} 
First we assume $c_1 = 0$. 
It is enough to observe that, from the inequality $n+\alpha \le 0$ and the exact sequence 
$$0 \to \shO_X(n-\alpha) \to \shE(n) \to \shI(n+\alpha) \to 0$$
we obtain: $h^0(\shE(n)) = h^0(\shO_X(n-\alpha)) = \chi(\shO_X(n-\alpha))+h^0(\shO_X(\epsilon-n+\alpha)) =  \chi(\shO_X(n-\alpha))$ since $\epsilon-n+\alpha \le -1$.  We also have: $h^0(\shE(\epsilon-n)) = h^0(\shO_X(\epsilon-n-\alpha)) = \chi(\shO_X(\epsilon-n-\alpha))+h^0(\shO_X(n+\alpha)) =  \chi(\shO_X(\epsilon-n-\alpha))$.
\\
Now it is enough to observe that  $h^1(\shE(n))-h^2(\shE(n)) = h^0(\shE(n))-h^0(\shE(\epsilon-n)) -\chi(\shE(n)) =  \chi(\shO_X(n-\alpha))- \chi(\shO_X(\epsilon-n-\alpha))-\chi(\shE(n))$. If we use the Riemann-Roch formulas for the Euler functions we obtain the required equality.
\\
Now we assume $c_1 = -1$.
We recall that $h^3(\shE(n)) = h^0(\shE(\epsilon-n+1))$. As before we have $h^1(\shE(n))-h^2(\shE(n)) =  \chi(\shO_X(n-\alpha))- \chi(\shO_X(\epsilon-n-\alpha+1))-\chi(\shE(n))$ and the computation is very similar.
\end{proof}

\begin{remark} Observe that the statement of Proposition \ref{nonstable0} still holds  when $n = \zeta_0$, because the two sides of the equality vanish.  
\end{remark}

\begin{theorem}\label{nonstable1} 
Let us assume that $\zeta_0 < -\alpha-c_1-1$ and let $n$ be such that $\zeta_0 < n \le -\alpha-1-c_1$. Then $h^1(\shE(n)) \ge (n-\zeta_0)\delta$. In particular  $h^1(\shE(n)) \ne 0$.
\end{theorem}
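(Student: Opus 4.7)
The plan is to read off the lower bound directly from Proposition~\ref{nonstable0}. That proposition supplies the identity
\[
h^1(\shE(n)) - h^2(\shE(n)) = (n-\zeta_0)\delta
\]
for every $n$ in the stated range, so combining with the trivial inequality $h^2(\shE(n))\ge 0$ yields at once
\[
h^1(\shE(n)) \ge (n-\zeta_0)\delta,
\]
which is the first assertion of the theorem.

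For the ``in particular'' conclusion I need to show strict positivity $(n-\zeta_0)\delta > 0$. The first factor is positive by the hypothesis $n > \zeta_0$, so the whole problem reduces to proving $\delta > 0$. The preceding remark only asserts $\delta \neq 0$ when $\shE$ is non-split, so this last inequality needs a separate (and very short) argument. I would invoke the Serre correspondence: a nonzero section $s \in H^0(\shE(\alpha))$ fits into an exact sequence
\[
0 \to \shO_X \to \shE(\alpha) \to \IY(c_1 + 2\alpha) \to 0,
\]
where $Y$ is the vanishing scheme of $s$. If $Y$ were empty, then $s$ would be nowhere vanishing and the extension would be classified by $\Ext^1(\shO_X(c_1+2\alpha),\shO_X)\cong H^1(\shO_X(-c_1-2\alpha))$, which vanishes by \cd. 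The resulting splitting would contradict the standing non-split assumption on $\shE$. Hence $Y$ is a nonempty codimension-two subscheme of $X$, and
\[
\delta = c_2(\shE(\alpha))\cdot H = \deg_H Y > 0.
\]

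The only genuinely delicate step is this positivity of $\delta$ (the remark alone gives only $\delta \neq 0$); everything else drops out mechanically once the identity of Proposition~\ref{nonstable0} and the bound $h^2\ge 0$ are in place. Note also that the range hypothesis $\zeta_0 < -\alpha - c_1 - 1$ is used only to ensure that some admissible $n$ exists and that Proposition~\ref{nonstable0} is applicable; no further case analysis on the parity of $\epsilon - c_1$ is needed, since the product $(n-\zeta_0)\delta$ is strictly positive regardless of whether $\zeta_0$ is an integer or a half-integer.
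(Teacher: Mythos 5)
Your argument is correct and is essentially the paper's own proof: both read the inequality off Proposition~\ref{nonstable0} together with $h^2(\shE(n))\ge 0$, and both conclude by the strict positivity of $(n-\zeta_0)\delta$ for a non-split bundle. Your explicit justification that $\delta>0$ (rather than merely $\delta\ne 0$) via $\delta=c_2(\shE(\alpha))\cdot H=\deg_H Y\ge 0$, with $Y\ne\emptyset$ forced by the vanishing of $H^1(\shO_X(-c_1-2\alpha))$ under $\mathrm{(C2)}$, is a small but genuine improvement on the paper, which simply asserts positivity while its Remark only records the equivalence $\delta=0\iff\shE$ splits.
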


\begin{proof}
It is enough to observe that $$h^1(\shE(n))-h^2(\shE(n)) = (n-\zeta_0)\delta$$ and that the right side of the equality is strictly positive for a non-split vector bundle.
\end{proof}

\begin{remark}
Observe that the above theorem describes  a non-empty set of integers if and only  if $-\alpha -1 -c_ 1 > \zeta_0$; this means  $\alpha < -\frac{\epsilon+2+c_1}{2}$, i.e.\ $\alpha \le -\frac{\epsilon+3+c_1}{2}$. So our assumption on $\alpha$ agrees with the bound of \cite{Madonna}.
\\
Observe that the inequality on $\alpha$ implies that $\alpha \le -2$ if $\epsilon \ge 1$.

 \end{remark}

\begin{theorem} \label{nonstable2}
Assume that $\frac{6\delta}{d}-\frac{\tau}{2d}+\frac{\epsilon^2}{4}-\frac{3c_{1}^2}{4}\ge 0$. Let $n > \zeta_0$ be such that $ \epsilon-\alpha-c_1+1 \le n < \zeta_0+\sqrt{\frac{6\delta}{d}-\frac{\tau}{2d}+\frac{\epsilon^2}{4}-\frac{3c_{1}^2}{4}}$ and put 
$$S(n) =\frac{d}{6}\left(n-\frac{\epsilon-c_1}{2}\right)\!\left[(n-\frac{\epsilon-c_1}{2})^2-6\frac{c_2+d \alpha^2+c_1d\alpha}{d}+\frac{\tau}{2d}-\frac{\epsilon^2}{4}+\frac{3c_1^2}{4}\right]\!.$$
Then $h^1(\shE(n))\ge -S(n) > 0$. In particular $h^1(\shE(n))\ne 0$.
\end{theorem}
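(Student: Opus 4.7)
The plan is to combine vanishing of higher cohomology in the range with the Serre correspondence and a direct Riemann--Roch comparison. The hypothesis $n\ge\epsilon-\alpha-c_1+1$ forces $\epsilon-n-c_1\le\alpha-1$, so Serre duality together with the definition of $\alpha$ gives $h^3(\shE(n))=h^0(\shE(\epsilon-n-c_1))=0$. Similarly $n-\alpha>\epsilon$ in the range, so $\mathbf{(C2)}$ and Serre duality yield $h^i(\shO_X(n-\alpha))=0$ for $i\ge 1$, whence $h^0(\shO_X(n-\alpha))=\chi(\shO_X(n-\alpha))$.

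Since $\shE$ is non-split and $\alpha$ is the minimal level, any $0\ne s\in H^0(\shE(\alpha))$ has a codimension-two zero locus $Y$, producing the Serre construction
\begin{equation*}
0\to\shO_X(n-\alpha)\to\shE(n)\to\shI_Y(n+\alpha+c_1)\to 0,\qquad \deg Y=\delta.
\end{equation*}
The injection $\shO_X(n-\alpha)\hookrightarrow\shE(n)$ on global sections gives $h^0(\shE(n))\ge\chi(\shO_X(n-\alpha))$, and substituting into $h^1=h^0+h^2-h^3-\chi$ (with $h^2\ge 0$ and $h^3=0$) yields
\begin{equation*}
h^1(\shE(n))\ge\chi(\shO_X(n-\alpha))-\chi(\shE(n)).
\end{equation*}

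It remains to verify $\chi(\shO_X(n-\alpha))-\chi(\shE(n))\ge -S(n)$. Setting $u=n-\zeta_0$, $v=\alpha+c_1/2$ and $A=\tau/(2d)-\epsilon^2/4$, a direct Riemann--Roch expansion via Theorem~\ref{gRR} reduces the difference to
\begin{equation*}
\bigl[\chi(\shO_X(n-\alpha))-\chi(\shE(n))\bigr]-\bigl[-S(n)\bigr]=\frac{d}{6}\!\left[-v\bigl(3u^2+3uv+v^2+A\bigr)+\tfrac{3c_1^2u}{4}\right]\!.
\end{equation*}
Since $v\le 0$ (from $\alpha\le 0$, $c_1\in\{0,-1\}$) and $u>0$, the correction $3c_1^2u/4$ is non-negative and it suffices to show $Q(u):=3u^2+3uv+v^2+A\ge 0$ throughout the range. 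The vertex of $Q$ lies at $u=-v/2\le 1/4$, well below the lower bound $u\ge\zeta_0-\alpha+1\ge 3/2$, so $Q$ is increasing on the range; evaluating at $u_0=\zeta_0-\alpha+1$ and setting $s:=u_0+v=\zeta_0+c_1/2+1\ge 3/2$ gives $Q(u_0)=3s^2-3sv+v^2+A\ge 3s^2+A>0$ by the inequalities $A\ge-1$ (for $\epsilon$ even) and $A\ge-1/4$ (for $\epsilon$ odd) proved in the previous proposition on the characteristic numbers. The strict positivity $-S(n)>0$ is immediate from $u>0$ and $u^2<6\delta/d-A-3c_1^2/4$. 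The main obstacle is the Riemann--Roch bookkeeping to identify the residual as the quadratic form above and control its sign using both the range of $n$ and the parity bounds on $A$.
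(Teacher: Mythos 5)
Your proof is correct and follows essentially the same route as the paper: the Serre sequence for a minimal section gives $h^1(\shE(n))\ge \chi(\shO_X(n-\alpha))-\chi(\shE(n))$ (you are in fact slightly more careful than the paper here, using only the inequality $h^0(\shE(n))\ge h^0(\shO_X(n-\alpha))$ rather than the equality the paper asserts), and the rest is the same Riemann--Roch residual computation; your expression $\frac{d}{6}[-v(3u^2+3uv+v^2+A)+\frac{3c_1^2u}{4}]$ agrees with the paper's $\frac{1}{2}nd\alpha(-\epsilon+n+\alpha)+\frac{1}{12}d\alpha(-3\epsilon\alpha+2\alpha^2+\epsilon^2+\frac{\tau}{d})$ when $c_1=0$. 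One small slip: the vertex of $Q$ sits at $-v/2=-\alpha/2-c_1/4$, which is \emph{not} $\le 1/4$ in general (e.g.\ $\alpha=-5$, $c_1=0$ gives $5/2$); what is true, and all you need, is that $u_0+v/2=\zeta_0-\alpha/2+1+c_1/4>0$ for $\epsilon\ge1$ and $\alpha\le0$, so the vertex always lies strictly below the left endpoint $u_0=\zeta_0-\alpha+1$ and $Q$ is still increasing on the range. With that correction your sign analysis at $u_0$ (using $v\le 0$, $s=\epsilon/2+1\ge 3/2$ and the bounds $A\ge-1$, resp.\ $A\ge-1/4$, from the proposition on the characteristic numbers) goes through, whereas the paper instead checks the sign of the residual term by term and treats the cases $\epsilon\le 0$ in a separate subsection.
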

\begin{proof}
Assume $c_1 = 0$. Under our hypothesis $h^0(\shE(\epsilon-n)) = 0$ and so $-h^1(\shE(n))+h^2(\shE(n)) = \chi(\shE(n)) -  h^0(\shO_X(n-\alpha))$. Observe that $\chi(\shE(n)) - h^0(\shO_X(n-\alpha)) -S(n) = \frac{1}{2}nd\alpha(-\epsilon+n+\alpha))+\frac{1}{12}d\alpha(-3\epsilon \alpha+2\alpha^2+\epsilon^2+\frac{\tau}{d})  \le 0$. Therefore we have:  $h^1(\shE(n)) \ge h^2(\shE(n))-S(n)$. Hence $h^1(\shE(n))$ may possibly  vanish  when 
$$\left(n-\frac{\epsilon}{2}\right)^2-6\frac{c_2+d \alpha^2}{d}+\frac{\tau}{2d}-\frac{\epsilon^2}{4} \ge 0.$$ When $S(n) < 0$, so $-S(n) > 0$, $h^1(\shE(n)) \ge -S(n) > 0$ and in particular it cannot vanish.
\\
If $c_1 = -1$ the proof is quite similar.
\end{proof}

Now we put $\frac{\tau}{2d}-\frac{\epsilon^2}{4} =\lambda$ and consider the following degree $3$ polynomial: 
$$F(X) = X^3+\left(\lambda-\frac{6\delta}{d}\right)X+\frac{6\alpha \delta}{d}.$$
It is easy to see that, if $\frac{6\delta}{d}-\frac{\tau}{2d}+\frac{\epsilon^2}{4} \le 0$,  $F(X)$ is strictly increasing and so it has only one real root $X_0$.

\begin{theorem} \label{nonstable3}
Assume that $\frac{6\delta}{d}-\frac{\tau}{2d}+\frac{\epsilon^2}{4} \le 0$. Let $n$ be such that $ \epsilon-\alpha-c_1+1 \le n < -\alpha+X_0+\zeta_0$, where $X_0$ = unique real root of $F(X)$ . Then $h^1(\shE(n)) \ge -\frac{d}{6}F(n+\alpha-\zeta_0) > -\frac{d}{6}F(X_0) = 0$. In particular  $h^1(\shE(n)) \ne 0$. 
\end{theorem}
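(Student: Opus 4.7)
The plan is to follow the same skeleton as Theorem~\ref{nonstable2}: produce the standard exact sequence from a section of $\shE(\alpha)$, kill $h^3(\shE(n))$ via Serre duality using the hypothesis $n \ge \epsilon-\alpha-c_1+1$, bound $h^0(\shE(n))$ below by $h^0(\shO_X(n-\alpha))$, and combine with Riemann--Roch. What is new here is that, under the stronger sign hypothesis, the discrepancy between $\chi(\shE(n))$ and $\chi(\shO_X(n-\alpha))$ turns out to equal \emph{exactly} $\tfrac{d}{6}F(x)$ with $x := n+\alpha-\zeta_0$; the monotonicity of $F$ then delivers non-vanishing on the stated interval.

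More precisely: since $\Pic(X)\cong\ZZ$ and $\alpha$ is minimal with $H^0(\shE(\alpha)) \ne 0$, any nonzero section of $\shE(\alpha)$ has zero locus $Y$ of pure codimension two, giving
$$0 \to \shO_X(n-\alpha) \to \shE(n) \to \shI_Y(n+\alpha+c_1) \to 0$$
and so $h^0(\shE(n)) \ge h^0(\shO_X(n-\alpha))$. The hypothesis $n \ge \epsilon-\alpha-c_1+1$ forces $\epsilon-n-c_1 < \alpha$, whence by Serre duality $h^3(\shE(n)) = h^0(\shE(\epsilon-n-c_1)) = 0$; the same inequality together with $\alpha \le 0$ yields $n-\alpha > \epsilon$, so $h^3(\shO_X(n-\alpha)) = 0$ and, by $\mathrm{(C2)}$, $h^0(\shO_X(n-\alpha)) = \chi(\shO_X(n-\alpha))$. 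Plugging into $\chi(\shE(n)) = h^0 - h^1 + h^2 - h^3$ and using $h^2(\shE(n)) \ge 0$ gives
$$h^1(\shE(n)) \ge \chi(\shO_X(n-\alpha)) - \chi(\shE(n)).$$

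The crux is then the algebraic identity
$$\chi(\shE(n)) - \chi(\shO_X(n-\alpha)) = \tfrac{d}{6}F(x), \qquad x := n+\alpha-\zeta_0.$$
Expanding both Hilbert polynomials from the excerpt in the variable $x$, using $\delta = c_2+d\alpha^2+c_1d\alpha$ and $\lambda := \tfrac{\tau}{2d}-\tfrac{\epsilon^2}{4}$, the $\alpha$-dependent contributions of degree $\le 2$ in $x$ collapse and one is left with precisely $x^3 + (\lambda-\tfrac{6\delta}{d})x + \tfrac{6\alpha\delta}{d}$; this computation must be carried out separately for $c_1 = 0$ and $c_1 = -1$, since $\zeta_0$ and $\delta$ both depend on $c_1$, although the final form of $F$ is uniform. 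Once the identity is in hand, the hypothesis $\lambda \ge \tfrac{6\delta}{d}$ forces $F'(x) = 3x^2 + (\lambda-\tfrac{6\delta}{d}) \ge 0$, so $F$ is strictly increasing with unique real root $X_0$ and $F(x) < 0$ for $x < X_0$; the interval $\epsilon-\alpha-c_1+1 \le n < -\alpha+X_0+\zeta_0$ corresponds exactly to the two conditions ($h^3$-vanishing and $x < X_0$), giving $h^1(\shE(n)) \ge -\tfrac{d}{6}F(x) > -\tfrac{d}{6}F(X_0) = 0$. The only real obstacle is the polynomial bookkeeping behind the identity above; everything else follows from the standard cohomology package.
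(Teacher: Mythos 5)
Your proposal is correct and follows essentially the same route as the paper: kill $h^3(\shE(n))$ via Serre duality using $n\ge \epsilon-\alpha-c_1+1$, bound $h^1$ below by $\chi(\shO_X(n-\alpha))-\chi(\shE(n))$, and identify this quantity with $-\frac{d}{6}F(n+\alpha-\zeta_0)$, concluding by monotonicity of $F$. The stated identity does check out (the paper verifies it through the intermediate step $\chi(\shO_X(n-\alpha))-\chi(\shE(n))=(n-\zeta_0)\delta+\chi(\shO_X(\epsilon-n-\alpha))$), and your use of the inequality $h^0(\shE(n))\ge h^0(\shO_X(n-\alpha))$ is if anything slightly more careful than the paper's appeal to Proposition \ref{nonstable0}, whose stated range does not literally cover the range of $n$ here.
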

\begin{proof}
Assume $c_1 = 0$, the proof being quite similar if $c_1 = -1$.
\\
It holds (see proposition \ref{nonstable0}): 
\begin{align*}
h^1(\shE(n)) & - h^2(\shE(n)) = \chi(\shO_X(n-\alpha))-\chi(\shE(n)) = \\
& = \left(n-\frac{\epsilon}{2}\right)(c_2+d\alpha^2)+\chi(\shO_X(\epsilon-n-\alpha)) = \\
& = \left(n-\frac{\epsilon}{2}\right)(c_2+d\alpha^2)-\frac{d}{6}\left(n+\alpha-\frac{\epsilon}{2}\right)\left(\left(n+\alpha-\frac{\epsilon}{2}\right)^2+\lambda\right).
\end{align*}
If we put: $X = n+\alpha-\frac{\epsilon}{2}$, we obtain: $\frac{d}{6} F(X) =  \frac{d}{6}(X^3+(\lambda-\frac{6\delta}{d})X+\frac{6\alpha \delta}{d}) = h^2(\shE(n))-h^1(\shE(n))$. Therefore $h^1(\shE(n)) > -\frac{d}{6}F(n+\alpha-\zeta_0) > -\frac{d}{6}F(X_0) = 0$.  

\end{proof}

\begin{remark}
Observe that in Theorems \ref{nonstable2} and \ref{nonstable3} $\alpha$ can be $0$.  
\end{remark}

\medskip

\noindent
\textbf{Case $\epsilon \le 0$.}

\begin{remark}In the event that $\epsilon \le -2$, we have $\epsilon-\alpha-c_1+1 \le -\alpha-c_1-1$.   Therefore Theorems \ref{nonstable2}, \ref{nonstable3} give something new only beyond $-\alpha-c_1-1.$ 
\end{remark}

First of all we observe that Theorems \ref{nonstable1}, \ref{nonstable3}  obviously hold as they are stated also when  $\epsilon \le 0$.  So we discuss Theorem \ref{nonstable2}.
\\
A. $\epsilon \le -2$.
\\
In theorem \ref{nonstable2} we need to know that
$$\frac{1}{2}nd\alpha(-\epsilon+n+\alpha)+\frac{1}{12}d\alpha(\epsilon^2+\frac{\tau}{d}-3\epsilon \alpha+2\alpha^2)  \le 0.$$
The first term of the sum is for sure negative; as for
$$\frac{1}{12}d\alpha\left(\epsilon^2+\frac{\tau}{d}\right)+\frac{1}{12}d\alpha^2(-3\epsilon+2\alpha)$$
we observe that the quantity in brackets has discriminant
$$\Delta = \epsilon^2-8\frac{\tau}{d} = 4\left(\frac{\epsilon^2}{4}-\frac{\tau}{2d}+\frac{\tau}{2d}-8\frac{\tau}{d}\right) \le 4(1-15) < 0.$$
Therefore it is positive for all $\alpha \le 0$ and the product is negative.
\\
B. $\epsilon = -1$.
\\
In theorem \ref{nonstable2} we need to know that
$$\frac{1}{2}nd\alpha(1+n+\alpha)+\frac{1}{12}d\alpha\left(1+\frac{\tau}{d}\right)+\frac{1}{12}d\alpha^2(3+2\alpha)  \le 0.$$  
If $\alpha \le -2$, then it is enough to observe that $\frac{\tau}{d}+3\alpha+2\alpha^2 \ge 0$. If $\alpha = -1$ we have to consider $-\frac{1}{2}n^2d+\frac{1}{12}d\frac{\tau}{d}$ and then we observe that $6n^2+\frac{\tau}{d} > 0$. If $\alpha = 0$ obviously the quantity is $0$.
\\
C. $\epsilon = 0$.
\\
In theorem \ref{nonstable2} we need to know that $$ \frac{1}{2}nd\alpha(n+\alpha)+\frac{1}{12}d\alpha\left(\frac{\tau}{d}\right)+\frac{1}{12}d\alpha^2(2\alpha)  \le 0.$$  
\\
It is enough to observe that $2\alpha^2+\frac{\tau}{d} > 0$ by Proposition 3.2, $\mathbf{6)}$ if $\alpha < 0$; otherwise we have a $0$ quantity, and that $n+\alpha \le 0$.

\begin{remark}
Observe that the case $\alpha = 0$ in Theorem \ref{nonstable1} can occur only if  $\epsilon \le -c_1-3$.
\end{remark}

\begin{remark}
In theorem \ref{nonstable2} we do not use the hypothesis $-\frac{\epsilon+3}{2} \ge \alpha$, but we assume that $6\frac{c_2+d \alpha^2}{d}-\frac{\tau}{2d}+\frac{\epsilon^2}{4}-1\ge 0$. 
In theorem \ref{nonstable3} we do not use the hypothesis $-\frac{\epsilon+3}{2} \ge \alpha$, but we assume that $6\frac{c_2+d \alpha^2}{d}-\frac{\tau}{2d}+\frac{\epsilon^2}{4} < 0$.  Moreover in both theorems there is a  range for $n$, the left endpoint being  $\epsilon-\alpha-c_1+1$ and the right endpoint being either $\zeta_0+\sqrt{6\frac{c_2+d \alpha^2}{d}-\frac{\tau}{2d}+\frac{\epsilon^2}{4}-1}$ (\ref{nonstable2}) or $\zeta_0-\alpha+X_0$ (\ref{nonstable3}).
\\
In \cite{Madonna} there are examples of ACM nonsplit vector bundles on smooth threefolds in $\PP^4$, with $-\frac{\epsilon+3+c_1}{2} < \alpha < \frac{\epsilon+5-c_1}{2}$. 
We want to emphasize that our theorems do not conflict with the examples of \cite{Madonna}: if $C$ is any curve described in \cite{Madonna} and lying on a smooth threefold of degree $d$, then our numerical constraints cannot be satisfied (we have checked it directly in many but not all cases).
\end{remark}

\begin{remark}
Let us consider a smooth degree $d$ threefold $X \subset \PP^4$. 
\\  
We  have: 
$$\epsilon = d-5,\ \ \tau =  d(10-5d+d^2),\ \  \theta = \frac{3c_2}{d}-\frac{d^2-5+3c_1^2}{4}$$
(see \cite{Valenzano}).
As to the characteristic function of $O_X$ and $\shE$, it holds:
$$\chi(\shO_X(n)) = \frac{d}{6}\left(n-\frac{d-5}{2}\right)\!\left[\left(n-\frac{d-5}{2}\right)^2+\frac{d^2-5}{4}\right]\!,$$
$$\chi(\shE(n)) = \frac{d}{3} \left(n-\frac{d-5-c_1}{2}\right)\!\left[\left(n-\frac{d-5-c_1}{2}\right)^2+\frac{d^2}{4}-\frac{5}{4}+\frac{3c_1^2}{4}-\frac{3c_2}{d}\right]\!.$$
Then it is easy to see that the hypothesis of Theorem \ref{nonstable2}, i.e. $6\frac{\delta}{d}-\frac{d^2-5+3c_1^2}{4}\ge 0$ is for sure fulfilled if $c_2 \ge 0, \alpha \le -\frac{d-2+c_1}{2}$. In fact we  have (for the sake of simplicity when $c_1 = 0)$: $-6\frac{6c_2+d\alpha^2}{d}+\frac{d^2-5}{4} \le \frac{d^2-5}{4}-6\frac{d^2-2d+1}{4} = -\frac{5d^2-12d+11}{4} < 0$.
\end{remark}
\begin{remark}  
Condition $\mathbf{(C2)}$ holds for sure if $X$ is a smooth hypersurface of $ \PP^4$. In general, for a characteristic $0$ base field, only the Kodaira vanishing holds (\cite{HAL}, remark 7.15) and so, unless we work over a threefold $X$ having  some stronger vanishing, we need assume, in Theorems \ref{nonstable1}, \ref{nonstable2}, \ref{nonstable3} that $n-\alpha \notin \{0,...,\epsilon\}$ (which implies, by duality, that also $\epsilon -n+\alpha \notin \{0,...,\epsilon\}$).

Observe that the first assumption ($n-\alpha \notin \{0,...,\epsilon\})$ in the case of Theorem \ref{nonstable1} is automatically fulfilled because of the hypothesis $\zeta_0 < -\alpha-c_1-1$, and in Theorems \ref{nonstable2} and \ref{nonstable3} because of  the hypothesis $\epsilon-\alpha-c_1+1 \le n$. In fact $n-\alpha$ is greater than $\epsilon$. But this implies that $\epsilon-n+\alpha < 0$ and so also the second condition is fulfilled, at least when $\epsilon \ge 0$. For the case $\epsilon < 0$ in positive characteristic see \cite{SB}.

Observe that, if $\epsilon < 0$, Kodaira (and so $\mathbf{(C2)}$) holds for every $n$.

 For a general discussion, also in characteristic $p > 0$, of this question, see section 7, remark 7.8.
\end{remark}
\begin{remark} In the above theorems we assume that $\shE$ is a nonsplit bundle. If $\shE$ splits, then (see section 2) $\delta = 0$. In Theorem \ref{nonstable1} this implies $h^1(\shE(n))-h^2(\shE(n)) = 0$ and so nothing can be said on the non-vanishing.

Let us now consider Theorem \ref{nonstable2}. If $\delta = 0$, then we must have: $\zeta_0 < n < \zeta_0+\sqrt{-\frac{\tau}{2d}+\frac{\epsilon^2}{4}-\frac{3c_1^2}{4}} \le \zeta_0+1$ (the last inequality depending on Proposition 3.2, $\mathbf{8),9)})$.  As a consequence $\zeta_0$ cannot be a whole number. Moreover, since we have $2\zeta_0-\alpha+1 \le n < \zeta_0+\sqrt{-\frac{\tau}{2d}+\frac{\epsilon^2}{4}-\frac{3c_1^2}{4}}$, we obtain that $\zeta_0 < \alpha \le 0$, hence $\epsilon-c_1 \le -1$. If $c_1 = 0$, $\epsilon \in \{-1, -3\}$. If $\epsilon = -3$,  then $n$ must satisfy (see Proposition 3.2, $\mathbf{8})$ the following inequalities: $-\frac{3}{2} < n < -1$, which is a contradiction. If $\epsilon = -1$, then, by Proposition 3.2, $\mathbf{8})$ we have $-1+\alpha+1 < -\frac{1}{2}+ \frac{1}{2} = 0$, which implies $\alpha > 0$, a contradiction.  If $c_1 = -1$, then $\epsilon \in \{-2, -4\}$. If $\epsilon = -4$, we have $\sqrt{-\frac{\tau}{2d}+\frac{\epsilon^2}{4}-\frac{3c_1^2}{4}} = \frac{1}{2}$, and so   we must have: $-\frac{3}{2} < n < -1$, which is impossible. If $\epsilon = -2$, then $\zeta_0 = -\frac{1}{2}$ and so $-2-\alpha+2 < -\frac{1}{2}+\sqrt{1-\frac{3}{4}}$, which implies $-\alpha < 0$ hence $\alpha > 0$, a contradiction with the non-stability of $\shE$.

Then we consider Theorem \ref{nonstable3}. The vanishing of $\delta$ on the one hand implies  $\lambda > 0$ and $X_0 = 0$.  But on the other hand from our hypothesis on the range of $n$ we see that $\zeta_0 \le -2$, hence $\epsilon = -4, c_1 = 0$. But this contradicts Proposition 3.2, $\mathbf{2)}$.
\end{remark}

\section{Stable vector bundles} 
In the present section we assume that $\alpha \ge \frac{\epsilon-c_1+5}{2}$, or equivalently that $c_1+2\alpha\ge\epsilon+5$. This means that $\alpha \ge 1$ in any event, so $\shE$ is stable.

The following lemma holds both in the stable and in the non-stable case.

\begin{lemma} \label{leftvanishing}
 If $h^1(\shE(m)) = 0$ for some integer $m \le \alpha-2$, then $h^1(\shE(n)) = 0$ for all $n \le m$.
\end{lemma}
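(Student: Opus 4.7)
The approach is a descending induction on $n \le m$. The base case $n = m$ is the hypothesis, and for the inductive step from $h^1(\shE(n+1)) = 0$ to $h^1(\shE(n)) = 0$ (with $n+1 \le m$) I would fix a general effective divisor $H \in |\OX(1)|$, which exists by (C3), and apply cohomology to the restriction sequence
\[
0 \to \shE(n) \to \shE(n+1) \to \shE(n+1)|_H \to 0.
\]
Since $n+1 \le m \le \alpha-2$, the minimality of $\alpha$ forces $h^0(\shE(n+1)) = 0$; combined with the inductive hypothesis $h^1(\shE(n+1)) = 0$, the long exact sequence collapses to an isomorphism $h^1(\shE(n)) \cong h^0(\shE(n+1)|_H)$. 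Each step therefore reduces to an $H^0$-vanishing on the surface $H$.

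To produce $h^0(\shE(k)|_H) = 0$ for every $k \le m+1$, I would run a companion descending induction on $H$. The base case $k = m+1$ drops out for free: the same restriction sequence at level $(m,m+1)$, combined with $h^0(\shE(m+1)) = 0$ and the given $h^1(\shE(m)) = 0$, yields $h^0(\shE(m+1)|_H) = 0$. For the propagation to smaller twists I would multiply sections by a nonzero element $t \in H^0(\OH(1))$, extracted from (C3) via the cohomology of $0 \to \OX \to \OX(1) \to \OH(1) \to 0$ together with the vanishing $h^1(\OX) = 0$ from (C2). Multiplication by $t$ yields injections $H^0(\shE(k-1)|_H) \hookrightarrow H^0(\shE(k)|_H)$ for every $k$, transmitting the base vanishing downward and then feeding back into the induction on $X$ to conclude.

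The main technical point is securing a nonzero section of $\OH(1)$ for the surface-level descent: (C3) only asserts that $H$ itself is effective, whereas the injection argument needs a second section of $\OX(1)$ not proportional to the one cutting out $H$, i.e.\ $h^0(\OX(1)) \ge 2$. In the settings where the lemma will be applied — typically smooth threefolds embedded in projective space — this inequality is automatic, so no further obstruction arises and the double induction closes at once.
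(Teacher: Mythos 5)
Your argument is correct and is essentially the paper's own proof: the paper merely notes that $\mathbf{(C3)}$ provides the restriction sequence and then defers to \cite{VV}, whose argument is precisely your double descending induction (the reduction of $h^1(\shE(n))$ to $h^0(\shE(n+1)|_H)$ on $X$, coupled with the multiplication-by-a-section-of-$\OH(1)$ descent on $H$). The technical caveat you flag --- that one needs $h^0(\OX(1))\ge 2$, not just $\mathbf{(C3)}$, to obtain a nonzero section of $\OH(1)$ --- is a fair observation, but it applies equally to the paper's proof, which passes over it in silence.
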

\begin{proof}
First of all observe that, by our condition $\mathbf{(C3)}$, from the restriction exact sequence we can obtain in cohomology the exact sequence 
$$0 \to H^0(\shE(t-1) \to H^0(\shE(t)) \to H^0(\shE_H(t)) \to 0.$$ Then we can follow the proof given in \cite{VV} for $\PP^3$ (where condition $\mathbf{(C3)}$  is automatically fulfilled).
\end{proof}

\begin{theorem}\label{stable1}
Let $\shE$ be a rank 2 vector bundle on the threefold $X$ with first relevant level $\alpha$.
If $\alpha\ge\frac{\epsilon+5-c_1}{2}$, then $h^1(\shE(n))\ne 0$ for $w_0\le n\le \alpha-2$.
\end{theorem}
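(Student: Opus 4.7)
The plan is to reduce the claim on the whole range $w_0\le n\le\alpha-2$ to a single non-vanishing via Lemma~\ref{leftvanishing}. A short integrality check, using $w_0=[\zeta_0]+1$ and the hypothesis $\alpha\ge\zeta_0+\tfrac{5}{2}$, shows that $\alpha\ge w_0+2$ regardless of the parity of $\epsilon-c_1$, so the integer interval $[w_0,\alpha-2]$ is non-empty and every $m$ in it satisfies $m\le\alpha-2$. The contrapositive of Lemma~\ref{leftvanishing} then says that a single non-vanishing at $n=w_0$ propagates upward: $h^1(\shE(w_0))\ne 0$ would imply $h^1(\shE(n))\ne 0$ for all $n$ in the stated range.

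To establish the single non-vanishing $h^1(\shE(w_0))\ne 0$, I would first collect the boundary vanishings at $n=w_0$. Stability gives $h^0(\shE(w_0))=0$ because $w_0<\alpha$, and Serre duality gives $h^3(\shE(w_0))=h^0(\shE(\epsilon-c_1-w_0))=0$ since $\epsilon-c_1-w_0=2\zeta_0-w_0\le\zeta_0<\alpha$. Hence $\chi(\shE(w_0))=h^2(\shE(w_0))-h^1(\shE(w_0))$. I would then argue by contradiction: assume $h^1(\shE(w_0))=0$. Applying Serre duality once more identifies $h^2(\shE(w_0))$ with $h^1(\shE(n_1))$ where $n_1:=\epsilon-c_1-w_0\le w_0\le\alpha-2$; feeding the vanishing $h^1(\shE(w_0))=0$ back into Lemma~\ref{leftvanishing} forces $h^1(\shE(n_1))=0$, hence $h^2(\shE(w_0))=0$ and consequently $\chi(\shE(w_0))=0$.

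Substituting into the Riemann--Roch expression $\chi(\shE(w_0))=\frac{d}{3}(w_0-\zeta_0)\bigl[(w_0-\zeta_0)^2-\vartheta\bigr]$ and using $w_0-\zeta_0>0$ produces the sharp identity $\vartheta=(w_0-\zeta_0)^2$; since $w_0-\zeta_0\in\{\tfrac12,1\}$, this pins $\vartheta$ down to one of the two values $\tfrac14$ or $1$. The hardest step is to rule this out. I plan to do so by a Bogomolov-type inequality: $\mu$-stability of $\shE$ transfers via Mehta--Ramanathan to strict $\mu$-semistability of $\shE|_S$ on a generic smooth $S\in|H|$, giving $c_2\ge c_1^2 d/4$ and therefore $\vartheta\ge -\lambda$ with $\lambda=\tau/(2d)-\epsilon^2/4$. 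Combined with the sharp lower bounds on $\lambda$ in Proposition~3.2 and the non-split condition $\delta>0$ together with the explicit bound $\alpha\ge\frac{\epsilon+5-c_1}{2}$, this should exclude the critical value of $\vartheta$; the delicate subcase is when Bogomolov is tight (which forces $\lambda$ to attain its minimum, essentially restricting to $\PP^3$ and the quadric in $\PP^4$), and there a direct case analysis on the admissible Chern classes completes the contradiction.
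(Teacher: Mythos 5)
Your reduction to the single value $n=w_0$ via Lemma~\ref{leftvanishing}, the boundary vanishings $h^0(\shE(w_0))=h^3(\shE(w_0))=0$, and the derivation of $h^2(\shE(w_0))=h^1(\shE(\epsilon-c_1-w_0))=0$ (hence $\chi(\shE(w_0))=0$) under the assumption $h^1(\shE(w_0))=0$ all match the paper's proof exactly. The genuine gap is in your last step. Having reduced to excluding $\vartheta=(w_0-\zeta_0)^2$, you propose a Bogomolov-type \emph{lower} bound $c_2\ge c_1^2d/4$, i.e.\ $\vartheta\ge-\lambda$; but Proposition 3.2 only gives $\lambda\ge-1$ (resp.\ $\lambda\ge-\tfrac14$ for $\epsilon$ odd), so this reads $\vartheta\ge(\text{something}\le 1)$, which is perfectly consistent with the values $\vartheta=1$ or $\vartheta=\tfrac14$ you need to rule out: a lower bound on $\vartheta$ cannot exclude them, and no case analysis on tightness will rescue this. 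In addition, the route is not available as stated: Mehta--Ramanathan restricts to general members of $|kH|$ for $k\gg0$, not of $|H|$, and Bogomolov's inequality can fail in positive characteristic, while the paper works over a field of arbitrary characteristic. The appeal to the non-split condition $\delta>0$ at this point is also a sign of trouble, since it plays no role in the stable case.

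The paper's own finish is much simpler and purely numerical, and it is where the full strength of $w_0\le\alpha-2$ enters. Since the three roots of the cubic $\chi(\shE(n))$ are symmetric about $\zeta_0$ and $w_0>\zeta_0$, the identity $\chi(\shE(w_0))=0$ forces $w_0$ to be the \emph{largest} root $\zeta=\zeta_0+\sqrt{\vartheta}$, hence $\chi(\shE(w_0+1))>0$. On the other hand $w_0+1\le\alpha-1$, so $h^0(\shE(w_0+1))=0$ and $h^3(\shE(w_0+1))=h^0(\shE(\epsilon-c_1-w_0-1))=0$, while $h^2(\shE(w_0+1))=h^1(\shE(\epsilon-c_1-w_0-1))=0$ by Lemma~\ref{leftvanishing} again (that twist lies below $w_0$); therefore $\chi(\shE(w_0+1))=-h^1(\shE(w_0+1))\le0$, a contradiction. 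Replacing your Bogomolov step by this one-line evaluation at $w_0+1$ turns your argument into the paper's proof.
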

\begin{proof}
By the hypothesis it holds $w_0 \le \alpha-2$, so we have $h^0(\shE(n))=0$ for all $n\le w_0+1$.
Assume $h^1(\shE(w_0))=0$, then by Lemma~\ref{leftvanishing} it holds $h^1(\shE(n))=0$ for every $n\le w_0$. Therefore we have
$$\chi(\shE(w_0)) = h^0(\shE(w_0)) + h^1(\shE(-w_0+\epsilon-c_1)) - h^0(\shE(-w_0+\epsilon-c_1)) = 0.$$ Now observe that the characteristic function has at most three real roots, that are symmetric with respect to $\zeta_0$. Therefore, if $w_0$ is a root, then $w_0 = \zeta_0+\sqrt{\theta}$ and the other roots are $\zeta_0$ and  $ \zeta_0-\sqrt{\theta}$. This implies that $\chi(\shE(w_0+1)) > 0$. On the other hand
$$\chi(\shE(w_0+1)) =  - h^1(\shE(w_0+1)) \le 0,$$
a contradiction. So we must have $h^1(\shE(w_0))\ne0$, then by Lemma \ref{leftvanishing} we obtain the thesis.
\end{proof}

\begin{corollary}
If $\shE$ is \emph{ACM}, then $\alpha<\frac{\epsilon+5-c_1}{2}$.
\end{corollary}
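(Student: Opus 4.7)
The plan is to prove this by direct contrapositive, applying Theorem \ref{stable1} immediately.

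Suppose for contradiction that $\shE$ is ACM, so $h^1(\shE(n)) = 0$ for every $n \in \ZZ$, but $\alpha \ge \frac{\epsilon+5-c_1}{2}$. Then the hypothesis of Theorem \ref{stable1} is satisfied, so that theorem would yield $h^1(\shE(n)) \ne 0$ for every $n$ in the range $w_0 \le n \le \alpha - 2$, directly contradicting the ACM assumption. The only thing to verify is that this range is non-empty, i.e.\ that $w_0 \le \alpha - 2$.

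To check $w_0 \le \alpha - 2$: recall $w_0 = [\zeta_0] + 1$ with $\zeta_0 = \frac{\epsilon - c_1}{2}$. From $\alpha \ge \frac{\epsilon+5-c_1}{2} = \zeta_0 + \tfrac{5}{2}$ and the fact that $\alpha$ is an integer, one splits into two subcases according to the parity of $\epsilon - c_1$. If $\epsilon - c_1$ is even, then $\zeta_0$ is an integer, and $\alpha \ge \zeta_0 + \tfrac{5}{2}$ combined with $\alpha \in \ZZ$ forces $\alpha \ge \zeta_0 + 3$, i.e.\ $[\zeta_0] + 1 \le \alpha - 2$. If $\epsilon - c_1$ is odd, then $[\zeta_0] = \zeta_0 - \tfrac{1}{2}$, so $w_0 = \zeta_0 + \tfrac{1}{2} \le \alpha - 2$, again using $\alpha \ge \zeta_0 + \tfrac{5}{2}$. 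In either case the range is non-empty, so Theorem \ref{stable1} applies and the contradiction is reached.

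The proof has essentially no obstacle: the corollary is just the contrapositive of Theorem \ref{stable1} together with a trivial integrality check showing that the cohomological interval produced there is non-empty precisely under the assumed bound on $\alpha$. One could equally well phrase the conclusion as: for an ACM normalized rank two bundle on $X$, $\alpha$ must satisfy $\alpha < \frac{\epsilon+5-c_1}{2}$, which, combined with the analogous lower bound coming from the non-stable case, places $\alpha$ in a bounded interval, recovering and generalising the range $1-\frac{d+c_1}{2} < \alpha < \frac{d-c_1}{2}$ from \cite{Madonna} when $X \subset \PP^4$ and $\epsilon = d-5$.
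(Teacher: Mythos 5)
Your proof is correct and is exactly the argument the paper intends: the corollary is stated without proof as the immediate contrapositive of Theorem \ref{stable1}, whose own proof already records that the hypothesis $\alpha\ge\frac{\epsilon+5-c_1}{2}$ forces $w_0\le\alpha-2$. Your parity check confirming the non-emptiness of the interval $w_0\le n\le\alpha-2$ matches that observation.
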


\begin{theorem} \label{stable2}
Let $\shE$ be a normalized rank 2 vector bundle on the threefold $X$ with $\vartheta\ge0$, then the following hold:
\begin{description}
\item[1)] $h^1(\shE(n))\ne 0$ for $\zeta_0< n < \zeta$.
\item[2)] $h^1(\shE(n))\ne 0$ for $w_0\le n \le \bar\alpha-2$, and also for $n=\bar\alpha-1$ if $\zeta\notin\ZZ$.
\item[3)] If $\zeta\in\ZZ$ and $\alpha<\bar\alpha$, then $h^1(\shE(\bar\alpha-1))\ne 0$.
\end{description}
\end{theorem}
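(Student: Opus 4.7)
The plan is to exploit the factorization
\[
\chi(\shE(n))=\frac{d}{3}(n-\zeta')(n-\zeta_0)(n-\zeta)
\]
together with Serre duality, and to use the standing stability hypothesis $\alpha\ge\frac{\epsilon-c_1+5}{2}=\zeta_0+\frac{5}{2}$ to collapse the Euler characteristic identity. The preliminary I would establish first is that $h^3(\shE(n))=0$ for every integer $n\ge w_0=[\zeta_0]+1$: by Serre duality $h^3(\shE(n))=h^0(\shE(\epsilon-c_1-n))$, and $\epsilon-c_1-n=2\zeta_0-n\le 2\zeta_0-w_0<\alpha$ under the stability bound, so this section group vanishes.

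For parts \textbf{1)} and \textbf{2)}, fix an integer $n$ with $\zeta_0<n<\zeta$. Since $\zeta'\le\zeta_0<n<\zeta$, the three linear factors carry signs $(+)(+)(-)$ and so $\chi(\shE(n))<0$. Combined with the $h^3$-vanishing just established, the Euler identity gives
\[
h^1(\shE(n)) \;=\; h^0(\shE(n))+h^2(\shE(n))-\chi(\shE(n)) \;\ge\; -\chi(\shE(n)) \;>\; 0.
\]
This simultaneously handles every integer in $[w_0,\bar\alpha-2]$, and, when $\zeta\notin\ZZ$, also the additional integer $n=\bar\alpha-1=[\zeta]<\zeta$ claimed in part \textbf{2)}, since it still lies in $(\zeta_0,\zeta)$.

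For part \textbf{3)}, assume $\zeta\in\ZZ$ and $\alpha<\bar\alpha=\zeta+1$, so $\alpha\le\zeta$. By condition $\mathbf{(C3)}$ and the restriction sequence, exactly as used in the proof of Lemma \ref{leftvanishing}, multiplication by a nonzero section of $\shO_X(1)$ gives injections $H^0(\shE(t-1))\hookrightarrow H^0(\shE(t))$; hence $h^0(\shE(\zeta))\ge h^0(\shE(\alpha))>0$. The preliminary still gives $h^3(\shE(\zeta))=0$, and $\chi(\shE(\zeta))=0$ because $\zeta$ is a root of the Hilbert polynomial, so the Euler identity reduces to
\[
h^1(\shE(\zeta))=h^0(\shE(\zeta))+h^2(\shE(\zeta))\ge h^0(\shE(\zeta))>0.
\]
The only real delicacy in this plan is verifying the inequality $2\zeta_0-w_0<\alpha$ (which depends slightly on the parity of $\epsilon-c_1$), but in every case it follows comfortably from $\alpha\ge\zeta_0+\frac{5}{2}$; the rest is pure sign bookkeeping on the factored Hilbert polynomial together with one use of the $\mathbf{(C3)}$-monotonicity of $h^0(\shE(-))$.
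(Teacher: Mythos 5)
Your proof is correct and follows the same core strategy as the paper's: analyze the sign of $\chi(\shE(n))=\frac{d}{3}(n-\zeta')(n-\zeta_0)(n-\zeta)$ on the integers of $(\zeta_0,\zeta)$, control $h^3$ by Serre duality, and read off $h^1>0$ from the Euler identity; your part \textbf{3)} is in fact a slightly cleaner version of the paper's, since you get $h^1(\shE(\zeta))=h^0(\shE(\zeta))+h^2(\shE(\zeta))>0$ directly without invoking Lemma \ref{leftvanishing}. The one difference worth noting is in the preliminary step: you kill $h^3(\shE(n))$ outright using the standing stability bound $\alpha\ge\zeta_0+\frac52$, whereas the paper only uses the $\mathbf{(C3)}$-monotonicity inequality $h^0(\shE(n))-h^0(\shE(\epsilon-c_1-n))\ge 0$, valid for every $n\ge w_0$ with no hypothesis on $\alpha$. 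Since the theorem is stated (and its ACM corollary is used) without any stability assumption, the paper's weaker inequality is what makes parts \textbf{1)} and \textbf{2)} hold for arbitrary $\alpha$; your argument as written is confined to the stable range, though it is easily repaired by replacing ``$h^3(\shE(n))=0$'' with ``$h^0(\shE(n))\ge h^3(\shE(n))$'', which follows from the same duality computation plus monotonicity.
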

\begin{proof}
\textbf{1)} The Hilbert polynomial of the bundle $\shE$ is strictly negative for each integer such that  $w_0\le n < \zeta$, but for such  an integer $n$ we have $h^2(\shE(n))\ge0$ and $h^0(\shE(n))-h^0(\shE(-n+\epsilon-c_1)) \ge 0$ since $n\ge-n+\epsilon-c_1$ for every $n\ge w_0$, therefore we must have $h^1(\shE(n))\ne 0$.
\\
\textbf{2)} It is simply a restatement of 1) in term of $\bar\alpha$, which is, by definition, the integral part of $\zeta+1$.
\\
\textbf{3)} If $\zeta\in\ZZ$, then $\zeta=\bar\alpha-1$, so we have $\chi(\shE(\bar\alpha-1))=\chi(\shE(\zeta))=0$. Moreover $h^0(\shE(\bar\alpha-1))\ne0$ since $\alpha<\bar\alpha$, therefore $h^0(\shE(\bar\alpha-1))-h^3(\shE(\bar\alpha-1)) > 0$, and $h^1(\shE(n)) = 0$ implies $h^1(\shE(m)), \forall m \le n$; hence we must have $h^1(\shE(\bar\alpha-1))\ne 0$ to obtain  the vanishing of $\chi(\shE(\bar\alpha-1))$.
\end{proof}

\begin{corollary}
If $\shE$ is \emph{ACM}, then $\vartheta<0$.
\end{corollary}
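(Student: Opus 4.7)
The plan is to argue by contraposition: assuming $\vartheta\ge 0$, I would exhibit an integer $n$ for which $h^1(\shE(n))\ne 0$, which directly contradicts the ACM hypothesis $h^1(\shE(n))=0$ for all $n\in\ZZ$. The whole point of Theorem~\ref{stable2} is precisely to produce such a witness $n$ whenever $\vartheta\ge 0$, so the corollary should essentially be a one-line consequence once the witness is identified in each configuration.

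More concretely, I would split into cases according to where $\zeta=\zeta_0+\sqrt\vartheta$ lies. If $\zeta\notin\ZZ$ and $[\zeta]>\zeta_0$, part~2) of Theorem~\ref{stable2} delivers $n=\bar\alpha-1=[\zeta]\in(\zeta_0,\zeta)$, and part~1) gives non-vanishing there; contradiction. If $\zeta\in\ZZ$ and $\alpha<\bar\alpha$, part~3) of the theorem directly supplies $n=\bar\alpha-1$ with $h^1(\shE(\bar\alpha-1))\ne 0$; contradiction. In the straightforward sub-case $\vartheta$ large (say $\sqrt\vartheta\ge 1$), the open interval $(\zeta_0,\zeta)$ contains an integer $n\ge w_0$ and part~1) closes the argument.

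The main obstacle I anticipate is the degenerate range in which $\vartheta\ge 0$ but the clauses of Theorem~\ref{stable2} all have empty witness sets: namely $\zeta\in\ZZ$ with $\alpha\ge\bar\alpha$, or $0\le\vartheta$ so small that $[\zeta]=[\zeta_0]$ and $(\zeta_0,\zeta)\cap\ZZ=\emptyset$. In these borderline configurations I would argue directly from the cubic formula
\[
\chi(\shE(n)) = \tfrac{d}{3}(n-\zeta')(n-\zeta_0)(n-\zeta),
\]
combined with Serre duality and the ACM hypothesis: when $h^1=h^2=0$ one has $\chi(\shE(n))=h^0(\shE(n))-h^0(\shE(\epsilon-n-c_1))$, and the monotonicity of $h^0$ in $n$ (supplied by condition $\mathbf{(C3)}$ and the hyperplane restriction sequence used in Lemma~\ref{leftvanishing}) forces $\chi(\shE(n))\ge 0$ for $n\ge w_0$. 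On the other hand, the cubic is strictly negative at integers of $(\zeta_0,\zeta)$, and in the edge cases the integrality/parity restrictions on the characteristic numbers $(d,\epsilon,\tau)$ from Proposition~3.2 (especially items \textbf{7)}--\textbf{10)}) rule out $\vartheta\ge 0$ with no integer in $(\zeta_0,\zeta)$ whenever $\shE$ is non-split. This handles the leftover cases and closes the proof.
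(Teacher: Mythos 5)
Your first two paragraphs reproduce what the paper itself intends: the corollary is stated without proof precisely because it is meant to be read off from Theorem~\ref{stable2} by contraposition, and your identification of the witness $n$ in the non-degenerate configurations (an integer in $(\zeta_0,\zeta)$ via parts 1)--2), or $n=\bar\alpha-1$ via part 3) when $\zeta\in\ZZ$ and $\alpha<\bar\alpha$) is correct. It is to your credit that you noticed the theorem produces no witness at all when $\vartheta\ge 0$ but $(\zeta_0,\zeta)\cap\ZZ=\emptyset$ and part 3) does not apply: that leftover case is real.

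The gap is in your third paragraph, where you dispose of the leftover case by asserting that the parity restrictions of Proposition~3.2, items \textbf{7)}--\textbf{10)}, exclude ``$\vartheta\ge 0$ with no integer in $(\zeta_0,\zeta)$'' for a non-split bundle. That cannot work: those items constrain only the characteristic numbers $(d,\epsilon,\tau)$ of the threefold, while $\vartheta=\frac{3c_2}{d}-\frac{\tau}{2d}+\frac{\epsilon^2}{4}-\frac{3c_1^2}{4}$ also depends on $c_2$, which is essentially a free integer (subject only to $\delta>0$). Concretely, on a quintic hypersurface $X\subset\PP^4$ one has $(d,\epsilon,\tau)=(5,0,50)$; taking $c_1=0$, $c_2=9$ gives $\zeta_0=0\in\ZZ$, $\delta=9+5\alpha^2>0$ and $\vartheta=\frac{27}{5}-5=\frac{2}{5}$, so $\vartheta>0$ while $(\zeta_0,\zeta)=(0,\sqrt{2/5}\,)$ contains no integer and $\zeta\notin\ZZ$; every constraint of Proposition~3.2 is satisfied. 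Your fallback via $\chi(\shE(n))=h^0(\shE(n))-h^0(\shE(\epsilon-c_1-n))\ge 0$ for $n\ge w_0$ hits the same wall: the cubic $\frac{d}{3}(n-\zeta')(n-\zeta_0)(n-\zeta)$ is negative at integers only inside $(\zeta_0,\zeta)$ or below $\zeta'$, and in the latter range the inequality on $h^0$ points the wrong way, so no contradiction arises exactly in the leftover case. Closing it would require an argument of a different nature (in effect, excluding ACM bundles with such invariants directly), not a parity count. For what it is worth, the paper supplies no proof of this corollary, and the deduction ``immediate from Theorem~\ref{stable2}'' suffers from the very degeneracy you isolated; but the resolution you propose does not repair it.
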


\begin{remark}
Observe that in this section we assume $\alpha \ge \frac{\epsilon-c_1+5}{2}$, in order to have $w_0 \le \alpha-2$ and so to have a non-empty range for $n$ in Theorem \ref{stable1}.\end{remark} 

\begin{remark}
Observe that in the stable case we need not assume any vanishing of $h^1(\shO_X(n))$.
\end{remark}

\begin{remark}
Observe that split bundles are excluded in this section because they cannot be stable.
\end{remark}

\section{Examples}

We need the following 
 
\begin{remark}
Let $X \subset \PP^4$ be a smooth threefold of degree $d$ and let $f$ be the projection onto $\PP^3$  from a general point of $\PP^4$ not on $X$, and consider a normalized rank two vector bundle $\shE$ on $\PP^3$ which gives rise to the pull-back $\shF = f^*(\shE)$.
We want to check that $f_\ast (\mathcal {O}_X) \cong \oplus _{i=0}^{d-1} \mathcal {O}_{\mathbb {P}^3}(-i)$.
\\
Since $f$ is flat and $\deg (f)=d$, $f_\ast (\mathcal {O}_X)$ is a rank $d$
vector bundle. The projection formula and the cohomology of the
hypersurface
$X$ shows that $f_\ast (\mathcal {O}_X)$ is ACM. Thus there are integers $a_0\ge \cdots \ge a_{d-1}$
such that $f_\ast (\mathcal {O}_X) \cong \oplus _{i=0}^{d-1} \mathcal {O}_{\mathbb {P}^3}(a_i)$.
Since $h^0(X,\mathcal {O}_X)=1$, the projection formula gives $a_0 = 0$ and $a_i<0$ for all $i>0$.
Since $h^0(X,\mathcal {O}_X(1)) = 5 = h^0(\mathbb {P}^3,\mathcal {O}_{\mathbb {P}^3}(1))+h^0(\mathbb {P}^3,\mathcal
{O}_{\mathbb {P}^3})$, the projection formula gives $a_1=-1$ and $a_i \le -2$ for all $i \ge 2$. Fix
an integer $t \le d-2$ and assume proved $a_i = -i$ for all $i\le t$
and $a_i < -t$ for all $i>t$. Since $h^0(X,\mathcal {O}_X(t+1))= \binom{t+5}{4}
= 
\sum _{i=0}^{t} \binom{t+4-i}{3}$, we get $a_{t+1} = -t-1$ and, if $t+1 \le d-2$, $a_i<-t-1$ for all $i>t+1$.
Since $f_\ast (\mathcal {O}_X) \cong \oplus _{i=0}^{d-1} \mathcal {O}_{\mathbb {P}^3}(-i)$, the projection
formula gives the following formula for the first cohomology module: $$H^i(\shF(n)) = H^i(\shE(n)) \oplus H^i(\shE(n-1)) \oplus ...\oplus H^i(\shE(n-d+1))$$ 
all $i$.
Observe that, as a consequence of the above equalitiy for $ i = 0$, we obtain that  $\shF$ has the same $\alpha$ as $\shE$. Moreover the pull-back $\shF = f^*(\shE)$ and $\shE$ have the same Chern class $c_1$, while $c_2(\shF) = dc_2(\shE)$ and therefore $\delta(\shF) = d\delta(\shE)$.
\end{remark}

\noindent
\textbf{Examples}

\noindent
\textbf{1.} (a stable vector bundle with $c_1 = 0$, $c_2 = 4$ on a quadric hypersurface $X$).
\\
Choose $d = 2$ and take the pull-back $\shF$ of the stable vector bundle $\shE$ on $\PP^3$ of \cite{VV}, example 4.1. Then the numbers of  $\shF$ (see Notation)  are: $c_1 = 0$, $c_2 = 4$, $\alpha = 1$, $\bar\alpha = 2$,  $\zeta_0 = -\frac{3}{2}$, $w_0 = -1$, $\theta = \frac{25}{4}$, $\zeta = -\frac{3}{2}+\sqrt{\frac{25}{4}} = 1 \in \ZZ$. From \cite{VV}, example 4.1, we know that $h^1(\shE) \ne 0$. Since  $H^1(\shF(1)) = H^1(\shE(1)) \oplus H^1(\shE)$, we have:  $ h^1(\shF(1))\ne 0$, one shift higher than it is stated in Theorem \ref{stable2}, 2.
\\
\textbf{2.} (a non-stable vector bundle with $c_1 = 0$, $c_2 = 45$ on a hypersurface of degree $5$).
Choose $d = 5$ and take the pull-back $\shF$ of the stable vector bundle $\shE$ on $\PP^3$ of \cite{VV}, example 4.5. Then the numbers of  $\shF$ (see Notation)  are: $c_1 = 0$, $c_2 = 45$, $\alpha = -3$, $\delta = 90$, $\zeta_0 = 0$. From \cite{VV}, theorem 3.8, we know that $h^1(\shE(12)) \ne 0$. Since  $H^1(\shF(16)) = H^1(\shE(16)) \oplus \dots \oplus H^1(\shE(12))$, we have:  $ h^1(\shF(16))\ne 0$ (Theorem \ref{nonstable2} states that $h^1(\shF(10) \ne 0$).  
\\
\textbf{3.} (a stable vector bundle with $c_1 = -1$, $c_2 = 2$  on a quadric hypersurface).
\\
Let $\shE$ be the rank two vector bundle corresponding to the union of two skew lines on a smooth quadric hypersurface $Q \subset \PP^4$. Then its numbers are : $c_1 = -1$, $c_2 = 2$, $\alpha = 1$ and it is known that $h^1(\shE(n)) \ne 0$ if and only if $n = 0$.
\\
Observe that in this case $\theta = \frac{5}{2} \ge 0, \zeta_0 = -1$,  $\bar\alpha = 1$. Therefore theorem \ref{stable2} states exactly that $h^1(\shE(0)) \ne 0$, hence this example is sharp.
\\
\textbf{4.} (a non-stable vector bundle with $c_1 = 0$, $c_2 = 8$ on a quadric hypersurface).
\\
Choose $d = 2$ and take the pull-back $\shF$ of the non-stable vector bundle $\shE$ on $\PP^3$ of \cite{VV}, example 4.10. Then the numbers of  $\shF$ (see Notation)  are: $c_1 = 0$, $c_2 = 8$, $\alpha = 0$, $\zeta_0 = -\frac{3}{2}$, $\delta = 8$. We know (see \cite{VV}, example 4.10) that $h^1(\shE(2)) \ne 0, h^1(\shE(3)) = 0$.  Since  $H^1(\shF(3)) = H^1(\shE(3)) \oplus H^1(\shE(2))$, we have:  $ h^1(\shF(3)) \ne 0$, exactly the bound of Theorem \ref{nonstable2}.

\begin{remark}
The bounds for a degree $d$ threefold in $\PP^4$ agree with \cite{VV}, where $\PP^3$ is considered.
\end{remark}

\section{Threefolds with $\Pic(X) \ne \ZZ$}
 
Let $X$ be a smooth and connected projective threefold defined over an algebraically closed field
$\mathbb {K}$. Let $\Num(X)$ denote the quotient of $\Pic(X)$ by numerical equivalence.  Numerical classes are denoted by square brackets $[\,\,]$.
We assume $\Num(X) \cong \mathbb {Z}$ and take the unique isomorphism $\eta : \Num(X)\to \mathbb {Z}$ such that $1$ is the image of a fixed ample line bundle. Notice that $M\in \Pic(X)$ is ample if and only if $\eta ([M])>0$.

\begin{remark}\label{boundedness}
 Let $\eta : \Num(X) \to \mathbb {Z}$ be as before. Notice that every effective divisor on $X$ is ample and hence its $\eta$ is strictly positive. For any $t\in \mathbb {Z}$ set $\Pic_t(X):= \{L\in \Pic(X): \eta ([L])=t\}$. Hence
$\Pic_0(X)$ is the set of all isomorphism classes of numerically trivial line bundles on $X$.
The set $\Pic_0(X)$ is parametrized by a scheme of finite type (\cite{La}, Proposition 1.4.37).
Hence for each $t\in \mathbb {Z}$ the set $\Pic_t(X)$ is bounded.
Let now $\shE$ be a rank $2$ vector bundle on $X$. Since   $\Pic_1(X)$ is bounded there is a minimal integer $t$ such that  there is $B\in \Pic_t(X)$ and  $h^0(E\otimes B) >0$. Call it $\alpha (E)$ or just $\alpha$.
By the definition of $\alpha$ there is $B\in \Pic_{\alpha}(X)$ such that
$h^0(X,\shE\otimes B) >0$. Hence there is a non-zero map $j: B^\ast \to E$. Since
$B^\ast$ is a line bundle and $j\ne 0$, $j$ is injective. The definition of $\alpha$ gives
the non-existence of a non-zero effective divisor $D$ such that $j$ factors through an inclusion $B^\ast \to B^\ast (D)$, because
$\eta ([D]) >0$. Thus the inclusion $j$ induces
an exact sequence
\begin{equation}\label{eqa1}
0 \to B^\ast \to \shE \to \mathcal {I}_Z\otimes B \otimes \det (\shE) \to 0
\end{equation}
in which $Z$ is a closed subscheme of $X$ with pure codimension $2$.
\\
Observe that $\eta([B]) = \alpha, \eta([B^*]) = -\alpha, \eta([B \otimes det(\shE)]) =  \alpha +c_1$, hence the exact sequence is quite similar to the usual exact sequence that holds true in the case $\Pic(X) \cong \ZZ$.
\end{remark}

\noindent
NOTATION:
\\
We set $\epsilon:= \eta ([\omega _X])$, $\alpha := \alpha (\shE)$ and $c_1:= \eta ([\det (\shE)])$. So we can speak of a normalized vector bundle $\shE$, with $c_1 \in \{ 0,-1\}$. Moreover we say that $\shE$ is stable if $\alpha > 0$, nonstable if $\alpha \le 0$.
Moreover ª$\zeta_0, \zeta, w_0, \bar \alpha, \theta$ are defined as in section 2.  

\medskip
 
\begin{remark}
 Fix any $L \in \Pic_1(X)$ and set: $d = L^3 = $ degree of $X$.The degree $d$ does not depend on the numerical equivalence class. In fact, if $R$ is numerically equivalent to $0$, then $(L+R)^3 = L^3+R^3+3L^2R+3LR^2 = L^3+0+0+0 = L^3$. Then it is easy to see that the formulas for $\chi(\shO_X(n))$ and $\chi(\shE(n))$ given in section 2 still hold if we consider $\shO_X \otimes L^{\otimes n}$ and $\shE \otimes L^{\otimes n}$ (see \cite{Valenzano}). \end{remark}

\begin{remark}\label{a1}
\quad (a) Assume the existence of  $L\in \Pic(X)$ such that $\eta ([L]) =1$ and $h^0(X,L)>0$.
Then for every integer $t>\alpha $ there is $M\in \Pic(X)$ such that $\eta ([M])=t$
and $h^0(X,E\otimes M)>0$.

\quad (b) Assume $h^0(X,L)>0$ for every $L\in \Pic(X)$ such that $\eta ([L]) =1$. Then $h^0(X,E\otimes M)>0$
for every $M\in \Pic(X)$ such that $\eta ([M]) > \alpha$.
\end{remark}

\begin{proposition}\label{a2}
Let $\shE$ be a normalized rank two vector bundle and assume the existence of a spanned $R\in \Pic(X)$ such that $\eta ([R])=1$. If char $K > 0$, assume that $\vert R \vert$ induces an embedding of $X$ outside finitely many points. Assume 
\begin{equation}\label{eqb1}
2\alpha \le -\epsilon-3-c_1
\end{equation}
and $h^1(X,\shE\otimes N)=0$ for every $N\in \Pic(X)$ such that
$\eta ([N]) \in \{-\alpha -c_1-1,\alpha +2+e\}$.  If $h^1(X,B)=0$ for every $B\in \Pic(X)$ such that $\eta
([B])=-2\alpha -c_1$, then $\shE$ splits. 

If moreover $h^1(X,M)=0$ for every $M\in \Pic(X)$ then it is enough to assume that $h^1(X,\shE\otimes N)=0$ for every $N\in \Pic(X)$ such that
$\eta ([N]) = -\alpha -c_1-1$.
\end{proposition}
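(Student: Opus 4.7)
The strategy is to use the basic exact sequence (\ref{eqa1}) from Remark~\ref{boundedness}
$$
0 \to B^\ast \to \shE \to \shI_Z \otimes B \otimes L \to 0,\qquad L:=\det(\shE),\ \eta([B])=\alpha,\ \eta([L])=c_1,
$$
to first show that the pure codimension-$2$ subscheme $Z$ is empty, and then to split the resulting line-bundle extension. A Chern-class computation in this sequence gives $c_2(\shE)\cdot H = -\alpha(\alpha+c_1)d + \deg(Z;H)$, hence $\delta = c_2 + d\alpha^2 + c_1 d\alpha = \deg(Z;H)$, so $Z=\emptyset$ is equivalent to $\delta=0$. Once $Z=\emptyset$ is known, the sequence becomes an extension of $BL$ by $B^\ast$ whose class sits in $\Ext^1(BL,B^\ast)\cong H^1(X,B^{-2}\otimes L^{-1})$; this vanishes by the third hypothesis since $\eta([B^{-2}L^{-1}])=-2\alpha-c_1$, so the extension splits and $\shE\cong B^\ast\oplus BL$.

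The core of the proof is therefore the vanishing $\delta=0$, which I plan to obtain by adapting Proposition~\ref{nonstable0} to the present $\Num(X)\cong\ZZ$ setting. Twisting the basic sequence by a line bundle $N$ with $\eta([N])=-\alpha-c_1-1$, the middle factor $\shI_Z\otimes BLN$ has $\eta(BLN)=-1$, hence $h^0(\shI_Z\otimes BLN)\le h^0(BLN)=0$ and $h^0(\shE\otimes N)=h^0(B^\ast\otimes N)$. Dually, Serre duality rewrites $h^3(\shE\otimes N)$ as an $h^0$ of a twist of $\shE$ in numerical class $\alpha+c_1+1+\epsilon$; using the sequence again at that class, together with the fact that $2\alpha\le -\epsilon-3-c_1$ forces the corresponding $\shI_Z\otimes BL$-factor into a negative numerical class, that $h^0$ too reduces to the $h^0$ of a single line bundle. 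Substituting both into $h^1(\shE\otimes N)-h^2(\shE\otimes N)=h^0(\shE\otimes N)+h^3(\shE\otimes N)-\chi(\shE\otimes N)$ and applying Riemann-Roch (Theorem~\ref{gRR}) to each line bundle, the same algebra as in Proposition~\ref{nonstable0} should yield
$$
h^1(\shE\otimes N) - h^2(\shE\otimes N) = \bigl(\eta([N])-\zeta_0\bigr)\,\delta.
$$
The hypothesis $2\alpha\le -\epsilon-3-c_1$ forces the coefficient $\eta([N])-\zeta_0=-\alpha-\tfrac{c_1}{2}-1-\tfrac{\epsilon}{2}$ to be at least $\tfrac12$. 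The direct hypothesis kills $h^1(\shE\otimes N)$, while Serre duality $h^2(\shE\otimes N)\cong h^1(\shE\otimes L^{-1}\otimes N^{-1}\otimes\omega_X)$ combined with the second $\shE$-vanishing hypothesis (at $\eta=\alpha+2+\epsilon$) kills $h^2(\shE\otimes N)$. The positive coefficient times $\delta$ then vanishes, giving $\delta=0$ and hence $Z=\emptyset$.

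For the ``moreover'' part, if $h^1(X,M)=0$ for every line bundle $M$, Serre duality gives $h^2(X,M)=0$ as well, so all the $h^0$ and $h^3$ terms in the Euler-characteristic bookkeeping above can be computed from Riemann-Roch alone, without recourse to the second $\shE$-hypothesis. The hard part will be the first step: checking that, in the $\Num(X)\cong\ZZ$ setting, the clean identity $h^1-h^2=(n-\zeta_0)\delta$ really does survive the drop from condition $\mathbf{(C2)}$ (all twists of $\shO_X$ acyclic, as used in Proposition~\ref{nonstable0}) to the per-numerical-class hypotheses that are now assumed. This requires verifying that every auxiliary line bundle whose cohomology enters the bookkeeping lies in a class covered either by the hypotheses or by the negativity consequences of $2\alpha\le -\epsilon-3-c_1$, and in particular resolving the off-by-one issues between the classes $-2\alpha-c_1$, $-2\alpha-c_1-1$, $\alpha+1+\epsilon$ and $\alpha+2+\epsilon$ that appear when comparing Serre-dual twists.
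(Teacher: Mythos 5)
Your overall architecture (reduce to $Z=\emptyset$ via $\delta=\deg(Z;H)=0$, then split the line-bundle extension using $h^1$ in class $-2\alpha-c_1$) is a genuinely different route from the paper's, which never computes an Euler characteristic: the paper restricts $\shE$ to the curve $T$ cut out by two general members of $\vert R\vert$, uses the inequality (\ref{eqb1}) to show $\shE\vert_T$ splits, lifts the splitting section from $T$ to the surface $H\in\vert R\vert$ using exactly the two $\shE$-vanishing hypotheses (the class $-\alpha-c_1-1$ controls $h^1$ on $X$ of $\shE\otimes A\otimes\det(\shE)^\ast\otimes R^\ast$, and the class $\alpha+\epsilon+2$ is the Serre dual of the $h^2$ term with the extra $R^\ast$ coming from the restriction sequence to $H$), and deduces $C\cap H=\emptyset$, hence $C=\emptyset$ by ampleness. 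That geometric argument needs no vanishing for auxiliary line bundles beyond the single class $-2\alpha-c_1$, which is why the proposition can be stated in arbitrary characteristic.

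Your Riemann--Roch route, as written, does not close, and the difficulty is precisely the one you flag at the end. First, the Serre dual of $h^2(\shE\otimes N)$ for $\eta([N])=-\alpha-c_1-1$ is an $h^1$ of $\shE$ twisted into class $\alpha+\epsilon+1$, not $\alpha+\epsilon+2$; the stated hypothesis therefore does not kill the $h^2$ term, and the class $\alpha+\epsilon+2$ in the proposition genuinely arises from the extra factor $R^\ast$ in the paper's lifting step, not from duality on $X$. Second, to convert $h^0(B^\ast\otimes N)$ and the $h^3$-side terms into Euler characteristics you need $h^1$ and $h^2$ to vanish for line bundles in classes such as $-2\alpha-c_1-1$ and $\epsilon+c_1+1$; none of these is a hypothesis of the proposition, the only line-bundle vanishing assumed is in class $-2\alpha-c_1$, condition $\mathbf{(C4)}$ is only introduced \emph{after} this proposition, and Kodaira vanishing is unavailable in positive characteristic, which the statement explicitly allows (and even in characteristic $0$ the class $\epsilon+c_1+1=\epsilon$ for $c_1=-1$ escapes Kodaira). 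So the identity $h^1-h^2=(\eta([N])-\zeta_0)\delta$, which is the engine of your argument, cannot be established from the stated hypotheses; you would be proving a different proposition with strictly stronger (and differently placed) vanishing assumptions.
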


\begin{proof}
By assumption there is $M\in \Pic(X)$ such that $\eta ([M]) = \alpha$ and $h^0(X,\shE\otimes M)>0$. Set
$A:= M^\ast$. We have seen in remark \ref{boundedness} that $\shE$ fits into an extension of the following type:
\begin{equation}\label{b2}
0 \to A \to \shE \to \mathcal {I}_C\otimes \det (\shE)\otimes A^\ast \to 0
\end{equation}
with $C$ a locally complete intersection closed subscheme with pure dimension $1$.
Let
$H$ be a general element of $\vert R\vert$ and $T$ the intersection of $H$ with another
general element of $\vert R\vert$. Observe that $T$, under our assumptions, is generically reduced by Bertini's theorem-see \cite{HAL}, Theorem II, 8.18 and  Remark II, 8.18.1. Since $R$ is spanned, $T$ is a locally complete intersection
curve and $C\cap T=\emptyset$. Hence $\shE\vert T$ is an extension of $\det (\shE)\otimes A^\ast \vert T$ by $A\vert T$.  Since T is generically reduced and locally a complete intersection, it
is reduced. Hence $h^0(T,M^\ast ) = 0$ for every ample line bundle $M$ on $T$.
Since $\omega _T \cong (\omega _X\otimes R^{\otimes 2})\vert T$, we have $\dim (\mbox{Ext}^1(T,\det (\shE)\otimes A^\ast,A)
=h^0(T,\det (\shE)\otimes (A^\ast)^{\otimes 2} \otimes \omega _\otimes R^{\otimes 2}))\vert T)= 0$ (indeed $\eta ([\det (\shE)\otimes (A^\ast )^{\otimes 2}\otimes \omega _X\otimes R^{\otimes 2}])
= 2\alpha +c_1+e+2 <0$). Hence $\shE\vert T \cong A\vert T\oplus (\det (\shE)\otimes A^\ast )\vert T$. Let $\sigma$ be the non-zero
section of $(\shE\otimes (A\otimes \det (\shE)^\ast )\vert T$ coming from the projection onto the second factor of the decomposition
just given. The vector bundle $\shE\vert H$
is an extension of $\det (\shE)\otimes A^\ast \vert H$ by $A\vert H$ if and only if $C\cap H =\emptyset$. Since
$R$ is ample, $C\cap H = \emptyset$ if and only if $C=\emptyset$. Hence
we get simultaneously $C\cap H=\emptyset$ and $\shE\vert H \cong A\vert H\oplus \det (\shE)\otimes A^\ast \vert H$
if we prove the existence of $\tau \in H^0(H,(\shE\otimes (A\otimes \det (\shE)^\ast )\vert H)$ such that $\tau \vert T = \sigma$.
To get $\tau$ it is sufficient to have $H^1(H,(E\otimes (A\otimes \det (\shE)^\ast \otimes R^\ast )\vert H) =0$.
A standard exact sequence shows that $H^1(H,(\shE\otimes (A\otimes \det (\shE)^\ast \otimes R^\ast )\vert H) =0$
if $h^1(X,(\shE\otimes (A\otimes \det (\shE)^\ast \otimes R^\ast ) =0$ and $h^2(X,(\shE\otimes (A\otimes \det (\shE)^\ast \otimes R^\ast
\otimes R^\ast) =0$. Since $\shE^\ast \cong \shE\otimes \det (\shE)^\ast$, Serre duality
gives $h^2(X,(E\otimes (A\otimes \det (\shE)^\ast \otimes R^\ast
\otimes R^\ast) =h^1(X,\shE\otimes A\otimes R^{\otimes 2}\otimes \omega _X)$. Since $\eta ([A\otimes \det (\shE)^\ast \otimes R^\ast ])=-\alpha -c_1-1$
and $\eta ([A\otimes R^{\otimes 2}\otimes \omega _X] )= \alpha +e+2$, we get that $C=\emptyset$. The last sentence follows because
$\eta ([A^{\otimes 2}\otimes \det (\shE)^\ast] )= -2\alpha -c_1$.
\end{proof}

\begin{remark}\label{a3}
Instead of the smoothness of $X$ we may assume that $X$ is locally algebraic factorial, i.e. that all local rings
$\mathcal {O}_{X,P}$ are factorial. This assumption seems to be essential, because without it a non zero section
of $E\otimes M$ with $\eta ([M]) = \alpha (E)$ could vanish on an effective Weil divisor and hence
we could not claim the existence of the exact sequence (\ref{b2}). 
\end{remark}
 
 \begin{remark}\label{a4}
Fix integers $t < z \le \alpha -2$. Assume the existence of $L \in \Pic(X)$ such that
$\eta ([L])=z$ and $h^1(X,E\otimes L)=0$. If there is $R\in \Pic(X)$ such that $\eta ([R])=1$
and
$h^0(X,R)>0$, then there exists $M\in \Pic(X)$ such that $\eta ([M]) =t$ and $h^1(X,E\otimes M)=0$.
If $h^0(X,R)>0$ for every $R\in \Pic(X)$ such that $\eta ([R])=1$, then $h^1(X,E\otimes M)=0$ for every $M\in
\Pic(X)$ such that $\eta ([M]) =t$.

The proof can follow the lines of Lemma \ref{leftvanishing}. In fact  consider a line bundle  $R$ with $\eta([R]) = 1$ and let $H$ be the zero-locus of a non-zero section of $R$; then we have the following exact sequence:
$$0 \to \shE \otimes L \to \shE \otimes L \otimes R \to (\shE \otimes L\otimes R)_H \to 0.$$ 
Now observe that the vanishing of $h^1(X,\shE \otimes L)$ implies that $h^0(\shE \otimes L \otimes R)_H = 0$. And now we can argue as in Lemma \ref{leftvanishing} (see also \cite{VV}).
\end{remark}

\begin{remark}\label{a6}
\quad (a) Assume the existence of  $L\in \Pic(X)$ such that $\eta ([L]) =1$ and $h^0(X,L)>0$.
Then for every integer $t>\alpha $ there is $M\in \Pic(X)$ such that $\eta ([M])=t$
and $h^0(X,E\otimes M)>0$.

\quad (b) Assume $h^0(X,L)>0$ for every $L\in \Pic(X)$ such that $\eta ([L]) =1$. Then $h^0(X,E\otimes M)>0$
for every $M\in \Pic(X)$ such that $\eta ([M]) > \alpha$.
\end{remark}

\begin{remark}In all our results of sections 4 and 5 we use the vanishing of $h^1(\shO_X(n))$ (and by Serre duality of $h^2(\shO_X(n))$), $\forall n$ (or, at least, $\forall n\notin \{0,\cdots,\epsilon\}$), see Remark 4.12.
 
From now on we need to use similar vanishing conditions and so we introduce the following condition:
 
\quad $\mathbf{(C4)}$ $h^1(X,L) =0$ for all $\Pic(X)$ such that either $\eta ([L]) <0$
or $\eta ([L]) >\epsilon$.
\\
 Observe that $\mathbf{(C4)}$ is always satisfied in characteristic 0 (by the Kodaira vanishing theorem). In positive characteristic it is often satisfied. This is always the case if $X$ is an abelian variety (\cite{Mumford} p. 150).
 \\
 Observe also that, if $\epsilon \le -1$, the Kodaira vanishing and our condition put no restriction on $n$ (see also Remark 4.12).
\end{remark}

\noindent
\textbf{Example}.  If (\ref{eqb1}) holds, then $-2\alpha -c_1 > \epsilon$. Hence we may apply Proposition \ref{a2} to $X$.
In particular observe that, in the case of an abelian variety with $\Num(X) \cong \mathbb {Z}$ or in the case
of a Calabi-Yau threefold with $\Num(X) \cong \mathbb {Z}$,  we have $\epsilon = 0$. 
Notice that Proposition \ref{a2} also applies to any threefold $X$ whose $\omega _X$ is a torsion sheaf.

\medskip
 
   With the assumption of condition $\mathbf{(C4)}$ the proofs of Theorems \ref{nonstable1}, \ref{nonstable2}, \ref{nonstable3} can be easily modified in order to obtain the  statements below ($\shE$ is normalized, i.e. $\eta ([\det (\shE )]) \in \{-1,0\}$), where, by the sake of simplicity, we assume $\epsilon \ge 0$ (if $\epsilon < 0$, $\mathbf{(C4)}$, which holds by \cite{SB}, implies that all the vanishing of $h^1$ and $h^2$ for all $L\in \Pic(X)$ hold).

\begin{theorem}\label{v1}
Assume $\mathbf{(C4)}$, $\alpha \le 0$, the existence of $R\in \Pic(X)$ such that $\eta ([R])=1$ and $\zeta _0 < -\alpha -c_1-1$. Fix an integer
$n$ such that $\zeta _0 < n \le -\alpha - 1 -c_1$. Fix
$L\in \Pic(X)$ such that $\eta ([L]) = n$. Then $h^1(\shE \otimes L) \ge (n-\zeta _0)\delta >0$. 
\end{theorem}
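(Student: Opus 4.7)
The plan is to transplant the proof of Proposition \ref{nonstable0} and Theorem \ref{nonstable1} into the numerical setting, using the sequence of Remark \ref{boundedness} in place of the Serre--type sequence available when $\Pic(X)\cong\ZZ$. Concretely, choose $B\in\Pic(X)$ with $\eta([B])=\alpha$ and $h^0(\shE\otimes B)\neq 0$, giving
\begin{equation*}
0 \to B^{\ast} \to \shE \to \shI_Z\otimes B\otimes\det(\shE) \to 0,
\end{equation*}
with $Z$ of pure codimension two. Tensor by $L$; since $\eta([B\otimes\det(\shE)\otimes L])=\alpha+c_1+n\le -1$ and every effective divisor has strictly positive $\eta$, one has $h^0(\shI_Z\otimes B\otimes\det(\shE)\otimes L)=0$, hence $h^0(\shE\otimes L)=h^0(B^{\ast}\otimes L)$.

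Next, I would replace both $h^0(\shE\otimes L)$ and $h^3(\shE\otimes L)$ by Euler characteristics of line bundles. The hypothesis $\zeta_0<n\le -\alpha-c_1-1$ forces $\eta([B^{\ast}\otimes L])=n-\alpha>\epsilon$ and $\eta([\omega_X\otimes B\otimes L^{\ast}])=\epsilon+\alpha-n<0$. Therefore condition $\mathbf{(C4)}$ and Serre duality kill $h^1$, $h^2$, $h^3$ of $B^{\ast}\otimes L$, so $h^0(B^{\ast}\otimes L)=\chi(B^{\ast}\otimes L)$. Applying the same exact sequence to $M:=\det(\shE)^{\ast}\otimes L^{\ast}\otimes\omega_X$ and using Serre duality $h^3(\shE\otimes L)=h^0(\shE\otimes M)$, an analogous vanishing yields $h^3(\shE\otimes L)=\chi(B^{\ast}\otimes M)$. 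A crucial observation (already noted for $\chi(\shO_X\otimes L^{\otimes n})$ at the start of Section 7) is that the Euler characteristics $\chi(B^{\ast}\otimes L)$, $\chi(B^{\ast}\otimes M)$ and $\chi(\shE\otimes L)$ depend only on the numerical classes of $L$ and $B$, because on a threefold triple intersection numbers are determined by numerical equivalence. Hence these Euler characteristics agree with the ones appearing in the Hilbert polynomial formulas of Section 2.

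Subtracting $\chi(\shE\otimes L)$ from $h^0(\shE\otimes L)+h^3(\shE\otimes L)$, the Riemann--Roch computation of Proposition \ref{nonstable0} reproduces verbatim the identity
\begin{equation*}
h^1(\shE\otimes L)-h^2(\shE\otimes L)=(n-\zeta_0)\,\delta.
\end{equation*}
Since $\shE$ is non-split, $\delta>0$ (by the remark in Section 2 that $\delta=0$ iff $\shE$ splits, combined with the strict positivity used in Theorem \ref{nonstable1}); since $n>\zeta_0$ and $h^2(\shE\otimes L)\ge 0$, we obtain the desired inequality $h^1(\shE\otimes L)\ge (n-\zeta_0)\delta>0$.

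The main obstacle in this plan is the bookkeeping required to check that, under the inequality $\zeta_0<n\le -\alpha-c_1-1$, each auxiliary line bundle $B^{\ast}\otimes L$, $\omega_X\otimes B\otimes L^{\ast}$, $B^{\ast}\otimes M$, $\omega_X\otimes B\otimes M^{\ast}$ has $\eta$ either strictly negative or strictly greater than $\epsilon$, so that $\mathbf{(C4)}$ together with Serre duality eliminates the intermediate cohomologies exactly as in the $\Pic(X)\cong\ZZ$ case. This is a routine but slightly tedious case analysis (on $c_1\in\{0,-1\}$ and the parity of $\epsilon-c_1$) that exactly mirrors the numerical estimates implicit in Proposition \ref{nonstable0}; once carried out, the remaining argument is purely formal.
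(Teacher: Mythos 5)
Your proposal is correct and follows exactly the route the paper intends: the paper's own ``proof'' of Theorems \ref{v1}--\ref{v3} is only the remark that they ``are based on the existence of the exact sequence (\ref{eqa1}) and on the properties of $\alpha$'' and ``follow the lines of the proofs given in the case $\Pic(X)\cong\ZZ$'', which is precisely the transplantation you carry out, including the two points the paper leaves implicit (the numerical invariance of the Euler characteristics and the $\eta$-bookkeeping needed to invoke $\mathbf{(C4)}$ and Serre duality, which does check out under $\zeta_0<n\le-\alpha-c_1-1$ and $\epsilon\ge 0$). The only slip is typographical: since $\chi=h^0-h^1+h^2-h^3$, the identity comes from $h^1-h^2=h^0-h^3-\chi$, so you should subtract $\chi(\shE\otimes L)$ from $h^0(\shE\otimes L)-h^3(\shE\otimes L)$, not from their sum.
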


\begin{remark} Observe that we should require the following conditions: $n-\alpha \notin \{0,\dots ,\epsilon \}, \epsilon-n+\alpha  \notin \{0,\dots ,\epsilon \}$. But they are automatically fulfiled under the  assumption that $\zeta_0 < -\alpha-c_1-1$.
\end{remark} 

\begin{theorem}\label{v2}
Assume $\mathbf{(C4)}$, $\alpha \le 0$, the existence of $R\in \Pic(X)$ such that $\eta ([R])=1$ and the same hypotheses of Theorem \ref{nonstable2}. Fix
$L\in \Pic(X)$ such that $\eta ([L]) = n$. Then $h^1(\shE \otimes L) \ge
-S(n) >0$ ($S(n)$ being defined as in Theorem \ref{nonstable2}).
\end{theorem}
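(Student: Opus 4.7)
The plan is to imitate the proof of Theorem \ref{nonstable2}, replacing the Serre--correspondence sequence used there by the extension produced in Remark \ref{boundedness} and invoking condition $\mathbf{(C4)}$ (together with Serre duality) to supply the cohomological vanishings that were automatic under $\mathbf{(C2)}$ in the $\Pic(X)\cong\ZZ$ setting.

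First I would fix, via Remark \ref{boundedness}, a line bundle $A\in\Pic(X)$ with $\eta([A])=-\alpha$ and an exact sequence
$$0 \to A \to \shE \to \shI_Z \otimes A^{\ast} \otimes \det(\shE) \to 0,$$
and then tensor it by $L$. Using $n\ge \epsilon-\alpha-c_1+1$ together with $\alpha\le 0$ and $c_1\le 0$, the numerical class $\eta([A\otimes L])=n-\alpha$ is strictly greater than $\epsilon$, so $\mathbf{(C4)}$ yields $h^1(A\otimes L)=0$. The long exact sequence in cohomology then gives $h^0(\shE\otimes L)\ge h^0(A\otimes L)$.

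The next step is to convert $h^0(A\otimes L)$ into $\chi(A\otimes L)$. By Serre duality and $\mathbf{(C4)}$, both $h^2(A\otimes L)$ and $h^3(A\otimes L)$ vanish: the numerical class of the Serre dual is $\epsilon-n+\alpha$, which is strictly negative under our hypotheses, and a line bundle whose $\eta$-class is negative admits no nonzero section. Analogously, Serre duality identifies $h^3(\shE\otimes L)$ with $h^0(\shE\otimes\det(\shE)^{-1}\otimes L^{-1}\otimes\omega_X)$, and this vanishes by the very definition of $\alpha$ in Remark \ref{boundedness}, since the numerical class $\epsilon-c_1-n$ is strictly less than $\alpha$.

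Combining these with the Euler--characteristic identity, I obtain
$$h^1(\shE\otimes L)-h^2(\shE\otimes L)=h^0(\shE\otimes L)-\chi(\shE\otimes L)\ge \chi(A\otimes L)-\chi(\shE\otimes L).$$
Because Euler characteristics are numerical invariants, $\chi(A\otimes L)$ and $\chi(\shE\otimes L)$ coincide with the Euler characteristics appearing in the proof of Theorem \ref{nonstable2}, so the polynomial computation carried out there gives $\chi(A\otimes L)-\chi(\shE\otimes L)\ge -S(n)$, and hence $h^1(\shE\otimes L)\ge h^2(\shE\otimes L)-S(n)\ge -S(n)>0$. The one mildly delicate point is the careful accounting of numerical classes required to confirm that $\mathbf{(C4)}$ applies to each twist that arises; once this bookkeeping is in place, the argument is a faithful translation of Theorem \ref{nonstable2} from the $\Pic(X)\cong\ZZ$ setting to the $\Num(X)\cong\ZZ$ setting.
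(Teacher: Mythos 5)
Your proposal is correct and follows essentially the same route the paper intends: the paper gives no separate proof of Theorem \ref{v2}, stating only that it follows the lines of Theorem \ref{nonstable2} using the exact sequence (\ref{eqa1}) and the Kodaira-type condition $\mathbf{(C4)}$, which is exactly what you carry out, with the numerical bookkeeping ($n-\alpha>\epsilon$, $\epsilon-n+\alpha<0$, $\epsilon-c_1-n<\alpha$) verified correctly. The only cosmetic point is that $h^0(\shE\otimes L)\ge h^0(A\otimes L)$ already follows from left-exactness of $H^0$; the vanishing $h^1(A\otimes L)=0$ is needed only to identify $h^0(A\otimes L)$ with $\chi(A\otimes L)$, which you do address.
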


\begin{theorem}\label{v3}
Assumption as in Theorem \ref{nonstable3}. Moreover assume $\mathbf{(C4)}$ and
 $n -\alpha \notin \{0,\dots ,\epsilon \}$. Fix
$L\in \Pic(X)$ such that $\eta ([L]) = n$. Then $h^1(\shE \otimes L) \ge
-\frac{d}{6}F(n+\alpha -\zeta _0)>0$ ($F$ being defined as in Theorem \ref{nonstable3}).  
\end{theorem}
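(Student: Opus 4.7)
The strategy is to transcribe the argument of Theorem~\ref{nonstable3}, with the Serre sequence attached to a section of $\shE(\alpha)$ replaced by the extension of Remark~\ref{boundedness} and condition $\mathbf{(C2)}$ replaced by $\mathbf{(C4)}$. Pick $B\in\Pic(X)$ with $\eta([B])=\alpha$ and the associated extension
\[
0 \to B^{\ast} \to \shE \to \shI_{Z}\otimes B\otimes \det(\shE) \to 0,
\]
tensor it by $L$, and aim to prove that $h^{3}(\shE\otimes L)=0$ and $h^{0}(\shE\otimes L)\ge \chi(\shO_{X}(n-\alpha))$. Because Riemann--Roch (Theorem~\ref{gRR}) depends only on numerical classes, $\chi(\shE\otimes L)=\chi(\shE(n))$ and $\chi(B^{\ast}\otimes L)=\chi(\shO_{X}(n-\alpha))$; assembling,
\[
h^{1}(\shE\otimes L) \ge h^{0}(\shE\otimes L)-h^{3}(\shE\otimes L)-\chi(\shE\otimes L) \ge \chi(\shO_{X}(n-\alpha))-\chi(\shE(n)),
\]
and the right hand side equals $-\tfrac{d}{6}F(n+\alpha-\zeta_{0})$ by the identity already established inside the proof of Theorem~\ref{nonstable3}. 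Strict positivity then follows, since $F$ is strictly increasing under $\tfrac{6\delta}{d}-\tfrac{\tau}{2d}+\tfrac{\epsilon^{2}}{4}\le 0$ and the upper bound $n<-\alpha+X_{0}+\zeta_{0}$ keeps the argument of $F$ to the left of its unique real root.

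For $h^{3}(\shE\otimes L)$, Serre duality gives $h^{0}(\shE\otimes N)$ with $\eta([N])=\epsilon-n-c_{1}$; the hypothesis $n\ge \epsilon-\alpha-c_{1}+1$ forces $\eta([N])<\alpha$, so by the very minimality in the definition of $\alpha$ (Remark~\ref{boundedness}) this cohomology vanishes. For $h^{0}(\shE\otimes L)$ the long exact sequence yields $H^{0}(B^{\ast}\otimes L)\hookrightarrow H^{0}(\shE\otimes L)$, so it remains to check $h^{0}(B^{\ast}\otimes L)=\chi(B^{\ast}\otimes L)$. Since $\alpha\le 0$ and $c_{1}\in\{-1,0\}$, the hypothesis gives $\eta([B^{\ast}\otimes L])=n-\alpha\ge\epsilon+1$, so $\mathbf{(C4)}$ furnishes $h^{1}(B^{\ast}\otimes L)=0$; Serre duality expresses both $h^{2}$ and $h^{3}$ of $B^{\ast}\otimes L$ in terms of $h^{1}$ and $h^{0}$ of a line bundle whose numerical class $\alpha-n+\epsilon<0$, the first vanishing by $\mathbf{(C4)}$ and the second because every effective divisor on $X$ has strictly positive $\eta$.

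The main delicate point is the numerical invariance of $\chi$: a numerically trivial summand in $c_{1}$ contributes nothing to the intersection numbers $c_{1}^{\,i}\cdot H^{3-i}$ appearing in Theorem~\ref{gRR}, so $\chi$ genuinely depends only on the class in $\Num(X)$, and we may recycle verbatim the $F$-computation of Theorem~\ref{nonstable3}. The auxiliary assumption $n-\alpha\notin\{0,\dots,\epsilon\}$ in the statement turns out to be automatic in our range, since $n-\alpha\ge \epsilon-2\alpha-c_{1}+1>\epsilon$, so no separate case analysis is required.
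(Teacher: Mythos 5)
Your proof is correct and follows exactly the route the paper intends: the paper only sketches the argument for Theorem \ref{v3}, stating that it rests on the exact sequence (\ref{eqa1}), the minimality property of $\alpha$, and condition $\mathbf{(C4)}$ in place of $\mathbf{(C2)}$, while otherwise following the proof of Theorem \ref{nonstable3}. You have supplied precisely those details (numerical invariance of $\chi$ in Riemann--Roch, vanishing of $h^3(\shE\otimes L)$ via minimality of $\alpha$, and the computation of $h^0(B^{\ast}\otimes L)$ via $\mathbf{(C4)}$ and Serre duality), and they all check out.
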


\begin{remark} Observe that in Theorems \ref{v2} and \ref{v3} we should require  $n -\alpha \notin \{0,\dots ,\epsilon \}$, but the assumption $\epsilon-\alpha-c_1+1 \le n$ implies that it is automatically fulfilled.  Observe that in Theorems \ref{v2} and \ref{v3} we require  $n -\alpha \notin \{0,\dots ,\epsilon \}$, but the assumption $\epsilon-\alpha-c_1+1 \le n$ implies that the requirement is automatically fulfilled.

\end{remark}

The proofs of the above theorems are based on the existence of the exact sequence (\ref{eqa1}) and on the properties of $\alpha$. They follow the lines of the proofs given in the case $\Pic(X) \cong \ZZ$. Here and in section 4 we actually need only the Kodaira vanishing (true in characteristic 0 and assumed in characteristic $p > 0$) and no further vanishing of the first cohomology.
 
Also the stable case can be extended to a smooth threefold with $\Num(X) \cong \ZZ$. Observe that the proofs can follow the lines of the proofs given in the case  $\Pic(X) \cong \ZZ$ and make use of Remark 7.6 (which extends \ref{leftvanishing}).

More precisely we have:

\begin{theorem}\label{v4}
Assumptions as in \ref{stable1} and fix $L\in \Pic(X)$ such that $\eta ([L]) = n$. Then, if $\alpha\ge\frac{\epsilon+5-c_1}{2}$, then $h^1(\shE\otimes L)\ne 0$ for $w_0\le n\le \alpha-2$.
\end{theorem}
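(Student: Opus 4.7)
The plan is to mirror the proof of Theorem~\ref{stable1}, substituting Remark~\ref{a4} for Lemma~\ref{leftvanishing} and using the fact that, since the Chern character factors through numerical equivalence, $\chi(\shE \otimes L)$ depends only on $\eta([L]) = n$ and hence agrees with the Hilbert polynomial $\chi(\shE(n))$ of Section~2.

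First I would reduce to the extremal twist. Suppose for contradiction that some $L$ with $\eta([L]) = n$ in the given range satisfies $h^1(\shE \otimes L) = 0$. Remark~\ref{a4} applied with $z = n$ and $t = w_0$ produces a line bundle $M$ of numerical degree $w_0$ with $h^1(\shE \otimes M) = 0$, so one may assume $n = w_0$ at the outset. Next, since $w_0 \le \alpha - 2 < \alpha$, the minimality of $\alpha$ forces $h^0(\shE \otimes L) = 0$; by Serre duality $h^3(\shE \otimes L) = h^0(\shE \otimes (\det \shE)^{-1} \otimes L^{-1} \otimes \omega_X)$, and since the numerical degree $2\zeta_0 - w_0$ of this twist satisfies $2\zeta_0 - w_0 < \zeta_0 < \alpha$ (using $w_0 > \zeta_0$ and $\alpha \ge \zeta_0 + 5/2$), we also obtain $h^3(\shE \otimes L) = 0$.

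The crucial step is to derive $h^2(\shE \otimes L) = 0$. Again via Serre duality this is $h^1$ of the dualizing twist of $L$, living at numerical level $2\zeta_0 - w_0 < w_0$. Applying Remark~\ref{a4} once more (in its stronger ``for every $R$'' form, which gives vanishing at every line bundle of a prescribed numerical degree) identifies this $h^1$ with zero. Combining, $\chi(\shE(w_0)) = 0$. Exactly as in the proof of Theorem~\ref{stable1}, the factorization $\chi(\shE(n)) = \tfrac{d}{3}(n - \zeta_0)\bigl[(n - \zeta_0)^2 - \vartheta\bigr]$ together with $w_0 > \zeta_0$ force $w_0 = \zeta_0 + \sqrt{\vartheta}$, and hence $\chi(\shE(w_0+1)) > 0$. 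But the same vanishing analysis, repeated at numerical level $w_0 + 1 \le \alpha - 1$, yields $\chi(\shE(w_0+1)) = -h^1(\shE \otimes L') \le 0$ for any $L'$ at that level, the sought contradiction.

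The main obstacle is a ``specific versus existential'' gap: Serre duality pairs $h^2(\shE \otimes L)$ with $h^1$ of a single, distinguished dualizing twist, whereas Remark~\ref{a4} in its basic form produces a vanishing only at some representative of the relevant numerical class. Resolving this either requires the stronger hypothesis of \ref{a4}(b), namely $h^0(R) > 0$ for every $R$ with $\eta([R]) = 1$, or a rerun of the hyperplane-restriction argument underlying \ref{a4} with the dualizing twist explicitly targeted; once this point is handled, the rest is a formal transcription of the arithmetic from Theorem~\ref{stable1}.
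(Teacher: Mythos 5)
Your proposal follows exactly the route the paper intends: the paper's entire ``proof'' of this statement is the sentence preceding it, which simply says to transcribe the proof of Theorem \ref{stable1} with Remark \ref{a4} playing the role of Lemma \ref{leftvanishing}, together with the observation that $\chi(\shE\otimes L)$ depends only on $\eta([L])$. The ``specific versus existential'' issue you isolate in the Serre-duality step is genuine and is nowhere addressed in the paper: one needs $h^1(\shE\otimes N)=0$ for the particular twist $N=(\det\shE)^{\vee}\otimes L^{\vee}\otimes\omega_X$, whereas the descent of Remark \ref{a4} only yields vanishing along the twists $L\otimes (R^{\vee})^{\otimes k}$ for the chosen $R$, and these need not include $N$ when $\Pic(X)\neq\Num(X)$; the clean fix is the ``for every $R$'' form of Remark \ref{a4}, i.e.\ assuming $h^0(R)>0$ for every $R$ with $\eta([R])=1$, the natural analogue of condition $\mathbf{(C3)}$. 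Your write-up is therefore at least as complete as the paper's, and more honest about the one point where the transcription is not purely formal.
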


\begin{theorem}\label{v4} Assumptions as in \ref{stable2} and fix $L\in \Pic(X)$ such that $\eta ([L]) = n$. Then  the following hold:
\begin{description}
\item[1)] $h^1(\shE\otimes L)\ne 0$ for $\zeta_0< n < \zeta$.
\item[2)] $h^1(\shE \otimes L)\ne 0$ for $w_0\le n \le \bar\alpha-2$, and also for $n=\bar\alpha-1$ if $\zeta\notin\ZZ$.
\item[3)] If $\zeta\in\ZZ$ and $\alpha<\bar\alpha$, then $h^1(\shE \otimes N) \ne 0$, for every $N$ such that $\eta([N]) = \bar\alpha-1$.
\end{description}
\end{theorem}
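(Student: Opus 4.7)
The plan is to adapt the proof of Theorem \ref{stable2} to the $\Num(X)\cong\ZZ$ setting, replacing Lemma \ref{leftvanishing} by its extension Remark \ref{a4} and the twists $\shO_X(n)$ by tensorings with line bundles $L$ of numerical class $n=\eta([L])$. The crucial observation is that both $\chi(\shE\otimes L)$ and Serre duality depend only on numerical invariants, so the Hilbert polynomial remains the same cubic as in \ref{stable2}, with the same three real roots $\zeta'\le\zeta_0\le\zeta$ under $\vartheta\ge 0$.

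For Part 1), fix $L$ with $\zeta_0<n<\zeta$. Then $\chi(\shE\otimes L)<0$, and trivially $h^2(\shE\otimes L)\ge 0$. By Serre duality, $h^3(\shE\otimes L)=h^0(\shE\otimes L^\vee\otimes\omega_X\otimes\det(\shE)^\vee)$, where the dual line bundle has numerical class $\epsilon-c_1-n<n$ since $n>\zeta_0$. The key step $h^0(\shE\otimes L)\ge h^3(\shE\otimes L)$ is the analog of the section-monotonicity used in \ref{stable2}: under the effectiveness hypothesis that every class in $\Pic_1(X)$ carries sections (implicit throughout section 7), sections of $\shE\otimes L^\vee\otimes\omega_X\otimes\det(\shE)^\vee$ lift to sections of $\shE\otimes L$ by tensoring with suitable elements of $\Pic_1(X)$, giving the desired inequality. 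Combining $\chi<0$, $h^2\ge 0$, and $h^0\ge h^3$ then forces $h^1(\shE\otimes L)\ge -\chi(\shE\otimes L)>0$.

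Part 2) is the reformulation of Part 1 in terms of the integer $\bar\alpha=[\zeta]+1$: the integers strictly between $\zeta_0$ and $\zeta$ are exactly $w_0,\dots,\bar\alpha-2$, with $\bar\alpha-1$ added when $\zeta\notin\ZZ$. For Part 3), when $\zeta=\bar\alpha-1\in\ZZ$ one has $\chi(\shE\otimes N)=0$ for every $N$ of numerical class $\bar\alpha-1$. Since $\alpha<\bar\alpha$, Remark \ref{a6}(b) yields $h^0(\shE\otimes N)>0$; the same monotonicity argument as in Part 1) produces $h^0(\shE\otimes N)>h^3(\shE\otimes N)$, so with $h^2\ge 0$, assuming $h^1(\shE\otimes N)=0$ forces $\chi>0$, a contradiction.

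The main obstacle will be the monotonicity step $h^0(\shE\otimes L)\ge h^3(\shE\otimes L)$: Serre duality singles out a specific line bundle whose relation to $L$ is not merely numerical, so one cannot simply invoke Riemann-Roch. The effectiveness hypothesis on the full $\Pic_1(X)$ (stronger than the mere existence of a single $R\in\Pic_1(X)$ with $h^0(R)>0$) is what enables the uniform conclusion for every fixed $L$; without it, one could only deduce the existence of some $L$ in the claimed numerical class with $h^1(\shE\otimes L)\ne 0$.
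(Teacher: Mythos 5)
Your proposal is correct and follows essentially the same route as the paper, which gives no separate argument for this theorem but simply states that the proofs ``follow the lines of the proofs given in the case $\Pic(X)\cong\ZZ$'' (i.e.\ of Theorem \ref{stable2}) using Remark \ref{a4} in place of Lemma \ref{leftvanishing}; your Euler-characteristic argument with $\chi<0$, $h^2\ge 0$ and $h^0\ge h^3$ is exactly the mechanism of Theorem \ref{stable2}, transported via the fact that $\chi$ and Serre duality only see numerical classes. Your explicit identification of the needed effectivity hypothesis on $\Pic_1(X)$ (to get the $h^0$-monotonicity that replaces condition $\mathbf{(C3)}$, and without which only an existence statement survives) is a correct and useful sharpening of what the paper leaves implicit.
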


\begin{remark}\label{a5} 
The above theorems can be applied to any $X$ such that  $\Num(X)$ $\cong \mathbb {Z}$, $\epsilon =0$ and $h^1(X,L)=0$
for all $L\in \Pic(X)$ such that $\eta ([L]) \ne 0$, for instance to 
 $X = $ an abelian threefold with $\Num(X) \cong \mathbb {Z}$.
 \end{remark}
 
 \begin{remark}
 If $X$ is any threefold (in characteristic $0$ or positive) such that $h^1(X,L) = 0, \forall L \in \Pic(X)$, then we can avoid the restriction $n-\alpha \notin \{0,...,\epsilon\}$. Not many threefolds, beside any $X \subset \PP^4$, fulfil these conditions.
 \end{remark}
 
 \begin{remark} 
 
 Observe that in Theorem \ref{v4} we do not assume $\mathbf{(C4)}$ (see also remark 5.8) \end{remark}
 
 \begin{remark}Observe that also in the present case ($\Num(X) \cong \ZZ$), we have: $\delta = 0$ if and only if $\shE$ splits. Therefore Remarks 4.13 and 5.9  apply  here. 
 \end{remark}

\bigskip\bigskip

\noindent
\textsc{Edoardo BALLICO} \\
Dipartimento di Matematica, Universit\`a di Trento \\
via Sommarive 14, 38050 Povo (TN), Italy \\
e-mail: \texttt{ballico@science.unitn.it}
\\[10pt]
\textsc{Paolo VALABREGA} \\
Dipartimento di Matematica, Politecnico di Torino \\
Corso Duca degli Abruzzi 24, 10129 Torino, Italy \\
e-mail: \texttt{paolo.valabrega@polito.it}
\\[10pt]
\textsc{Mario VALENZANO} \\
Dipartimento di Matematica, Universit\`a di Torino \\
via Carlo Alberto 10, 10123, Torino, Italy \\
e-mail: \texttt{mario.valenzano@unito.it}

\end{document}